\newbox\bz@
\newdimen\bdimz@
\def\linethrough#1{\setbox\bz@=\hbox{#1}%
\bdimz@=\ht\bz@ \divide\bdimz@ by 5 \advance\bdimz@ by -\dp\bz@ \ht\bz@=\bdimz@
\leavevmode\hbox{$\overline{\overline{\box\bz@}}$\relax}}
\def \new #1 {\textcolor{Maroon}{\underline{#1}} }
\newcommand{\BEQ} {\begin{equation} }
  \newcommand{\EEQ} {\end{equation} }
\newcommand{\BTHM}{\begin{theorem}}
  \newcommand{\ETHM}{\end{theorem}}
\newtheorem{lemma}{Lemma}[section]
\newtheorem{theorem}{Theorem}[section]
\newtheorem{definition}{Definition}[section]
\newtheorem{proposition}{Proposition}[section]
\newtheorem{remark}{Remark}[section]
\newcommand{\tore}{\mathbb{T}_3}
\newcommand{\expk}{e^{-i\mathbf{k \cdot x}} }
\newcommand{ \vit}{\hbox{\bf u}}
\newcommand{ \vittest }{\hbox{\bf v}}
\newcommand{ \wit}{\hbox{\bf w}}
\newcommand{ \bc}{\mathbf{c}}
\newcommand{ \bd}{\mathbf{d}}
\newcommand{ \bu}{\hbox{\bf u}}
\newcommand{ \bB}{\hbox{\bf B}}
\newcommand{ \bb}{\hbox{\bf b}}
\newcommand{ \bx}{\hbox{\bf x}}
\newcommand{ \bh}{\hbox{\bf h}}
\newcommand{ \bn}{D_N}
\newcommand{ \obu}{\overline{\bu}}
\newcommand{ \bw}{\hbox{\bf w}}
\newcommand{ \obw}{\overline{\bw}}
\newcommand{ \bH}{\mathbf{H}}
\newcommand{ \bff}{\hbox{\bf f}}
\newcommand{ \Z}{\mathbbm{Z}}
\newcommand{\moy} {\overline {\vit} }
\newcommand{\moyb} {\overline {\bB} }
\newcommand{\g} {\nabla }
\newcommand{\p} {\partial}
\newcommand{\x} {{\bf x}}
\newcommand{\N}{\mathbb{N}}
\newcommand{\R}{\mathbb{R}}
\newcommand{\ind}{m}
\newcommand{\witt}{\overline \wit}
\begin{document}

\title{\bf Convergence of approximate deconvolution models to the
  mean Magnetohydrodynamics Equations: Analysis of two models}

\author{Luigi C. Berselli \thanks{Dipartimento di Matematica Applicata
    Universit\`a di Pisa, Via F. Buonarroti 1/c, I-56127, ITALY,
    berselli@dma.unipi.it,
    http://users.dma.unipi.it/berselli}
  \and
  Davide Catania
  \thanks{Dipartimento di Matematica, Universit\`a di Brescia,
    Via Valotti 9, 25133 Brescia, ITALY, \mbox{davide.catania@ing.unibs.it},
    http://www.ing.unibs/$\sim$davide.catania/}
  \and
  Roger Lewandowski
  \thanks{IRMAR, UMR 6625, Universit\'e Rennes 1, Campus Beaulieu,
    35042 Rennes cedex FRANCE, Roger.Lewandowski@univ-rennes1.fr,
    http://perso.univ-rennes1.fr/roger.lewandowski/}}


\maketitle

\begin{abstract}
  We consider two Large Eddy Simulation (LES) models for the
  approximation of large scales of the equations of
  Magnetohydrodynamics (MHD in the sequel). We study two
  $\alpha$-models, which are obtained adapting to the MHD the approach
  by Stolz and Adams with van Cittert approximate deconvolution
  operators.
  First, we prove existence and uniqueness of a regular weak solution
  for a system with filtering and deconvolution in both
  equations. Then we study the behavior of solutions as the
  deconvolution parameter goes to infinity. The main result of this
  paper is the convergence to a solution of the filtered MHD
  equations. In the final section we study also the problem with
  filtering acting only on the velocity equation.
\end{abstract}

MCS Classification : 76W05, 35Q30, 76F65, 76D03

\medskip

Key-words: Magnetohydrodynamics, Large Eddy Simulation,
Deconvolution models.
\section{Introduction}
In this paper we study the equations of (double viscous)
incompressible MHD
\begin{equation}
  \label{eq:MHD}
 \begin{aligned}
   &\partial_t\bu+\nabla\cdot(\bu\otimes \bu)-\nabla\cdot(\bB\otimes
   \bB)+\nabla p-\nu\Delta\bu=  \bff,
   \\
   &\partial_t\bB+\nabla\cdot(\bB\otimes \bu)-\nabla\cdot(\bu\otimes
   \bB)- \mu\Delta\bB=0,
   \\
   &\nabla\cdot\bu=\nabla\cdot\bB=0,
   \\
   & \vit (0, \x) = \vit_0 (\x), \qquad \bB (0, \x) = \bB_0 (\x),
 \end{aligned}
\end{equation}
where $\nu>0$ is the kinematic viscosity, while $\mu>0$ is the
magnetic diffusivity.  The fields $\bu$ and $\bB$ are the velocity and
the magnetic field respectively, while the scalar $p$ is the pressure
(rescaled by the density supposed constant here). We consider the
problem in the three dimensional setting, and most of the technical
difficulties are those known for the 3D~Navier--Stokes equations
(NSE). Examples of fluids which can be described by these
equations~\eqref{eq:MHD} are for instance plasmas, liquid metals, and
salt water or electrolytes.  See Davidson~\cite{Dav2001} for an
introduction to the topic. In this paper we aim to study the
approximate deconvolution procedure (developed for turbulent flows by
Stolz and Adams~\cite{SA1999,SAK2001,AS2001}) and especially its
adaption to the MHD with the perspective of numerical simulations of
turbulent incompressible flows, when  coupled to a magnetic field.

In the recent years, the topic of MHD attracted the interest of many
researchers and, for the study of the question of existence,
uniqueness, regularity, and estimates on the number of degrees of
freedom, we recall the following
papers~\cite{Cat2011b,CS2011,HLT2010,LT2010,LT2011,LaTi2011,LST2010,LT2007}.

Approximate Deconvolution Models (ADM) for turbulent flows without
magnetic effects were studied in~\cite{BL2011, DE2006,
  LR2012,Lew2009}. The problem of the limiting behavior of the models
when the grid mesh size goes to zero is already under
control~\cite{DE2006, LL2008, LL06b, RS2010}.On the other hand, the
question of the limiting behavior of the solutions when the
deconvolution parameter goes to infinity is a very recent topic, and
is well-studied just for the NSE --without any coupling--
in~\cite{BL2011} (see also a short review in~\cite{Ber2012}).

In the context of MHD, the topic seems not explored yet, hence we
adapt here the results of~\cite{BL2011} to the equations with the
magnetic field and we find also some interesting unexpected variant,
related to the applications of two different filters. Especially the
equation for the magnetic field turns out to behave much better than
that for the velocity, hence it seems not to require filtering.

To briefly introduce the problem (the reader can find more details in
the introduction of \cite{BL2011}), we recall that the main underlying
idea of LES, see \cite{BIL2006, CLM2012, PS01}, is that of computing
the ``mean values'' of the flow fields $\vit=(u^1,u^2,u^3)$,
$\bB=(B^1,B^2,B^3)$, and $p$. In the spirit of the work started with
Boussinesq \cite{JB77} and then with Reynolds \cite{OR83}, this
corresponds to find a suitable computational decomposition
\begin{equation*}
 \bu=\obu+\bu',\qquad\bB=\overline{\bB}+\bB',\qquad\text{and}\qquad
 p=\overline{p}+p', 
\end{equation*}
where the primed variables are fluctuations around the over-lined mean
fields.  In our context, the mean fields are defined by application of
the inverse of a differential operator.  By assuming that the
averaging operation commutes with differential operators, one gets the
filtered MHD equations
\begin{equation}
  \label{eq:MHD2}
  \begin{aligned}
    \partial_t\moy+\nabla\cdot(\overline {\bu\otimes
      \bu})-\nabla\cdot\overline{(\bB\otimes
      \bB)}+\nabla \overline p -\nu\Delta\moy=\overline \bff,
   \\
    \partial_t\moyb+\nabla\cdot\overline{(\bB\otimes \bu)} -\nabla\cdot\overline{(\bu\otimes
      \bB)}- \mu\Delta\moyb=0,
    \\
    \nabla\cdot\moy=   \nabla\cdot\moyb=0,
    \\
    \moy (0, \x) = \overline{\vit_0} (\x), \qquad \moyb (0, \x) = \overline{\bB}_0 (\x).
  \end{aligned}
\end{equation}
This raises the question of the \textit{interior closure problem},
that is the modeling of the tensors
\begin{equation*}
  \overline{(\bc\otimes\bd)}\qquad
  \text{with either} \mathbf{c},\,\mathbf{d}=\vit \text{ or } \bB
\end{equation*}
in terms of the filtered variables $(\moy, \moyb,\overline{p})$.

From this point, there are many modeling options. The basic model is
the sub-grid model (SGM) that introduces an eddy viscosity of the form
$\nu_t = C h(x)^2 | \nabla \bu|$, which may be deduced from Kolmogorov
similarity theory (see \cite{CLM2012}), where $h(x)$ denotes the local
size of a computational grid, and $C$ is a constant to be fixed from
experiments. This model, that already appears in Prandtl
work~\cite{LP52} with the mixing length $\ell$ instead of $h(x)$, was
firstly used by Smagorinsky for numerical
simulations~\cite{JS63}. This is a very good model, but introduces
numerical instabilities in high-gradient regions, depending on the
numerical scheme and potential CFL constraints.

Among all procedures to stabilize the SGM, the most popular was
suggested by Bardina \textit{et al.}~\cite{BFR80}, which reveals being
a little bit too diffusive and underestimates some of the resolved
scales, that are called ``Sub Filter Scales'' (SFS) (see for
instance~\cite{CSXF05, GC03}). Then the model needs to be
``deconvolved'' to reconstruct accurately the SFS. Hence, many options
occur here, too.  In the present paper we study the Approximate
Deconvolution Model (ADM), introduced by Adams and
Stolz~\cite{SA1999,AS2001}, who have successfully transferred image
modeling procedures~\cite{BB98} to turbulence modeling.

From a simplified and naive mathematical viewpoint, this model, which uses similarity properties of turbulence, is defined by
approximating the filtered bi-linear terms as follows:
\begin{equation*}
  \overline{(\bc\otimes\bd)}\sim
  \overline{(D_N(\overline{\bc})\otimes D_N(\overline{\bd})}\, .
\end{equation*}
Here the \textit{filtering operators} $G_i$ are defined thanks to the
Helmholtz filter (cf.~(\ref{eq:differential_filter})--(\ref{eq:TTTOB})
below) by $G_1(\bu)=\moy$, $G_2(\bB)=\moyb$, where
$G_i:=(\mathrm{I}-\alpha_i^2 \Delta)^{-1}$, $i=1, 2$.  Observe that we
can then have two different filters corresponding to the equation for
the velocity and for that of the magnetic field.  There are two
interesting values for the couple of parameters
$(\alpha_1,\alpha_2)\in\R^+\times\R^+$:
\begin{enumerate}
\item $\alpha_1=\alpha_2>0$. In this case the approximate equations
  conserve Alfv\'en waves, see~\cite{LT2011};
\item $\alpha_1>0$, $\alpha_2=0$, which means no filtering in the
  equation for $\bB$.
\end{enumerate}
The deconvolution operators $D_{N_i}$ are defined through the van Cittert
algorithm~\eqref{eq:deconvolution-operator}
%
and the initial value problem that we consider in the space periodic
setting is:
\begin{equation}
  \label{eq:adm1}
  \begin{aligned}
    & \partial_t \bw +\nabla\cdot G_1 \big(D_{N_1}(\bw) \otimes
    D_{N_1}(\bw)\big) - \nabla\cdot G_1\big(D_{N_2}(\bb) \otimes
    D_{N_2}(\bb)\big) +\nabla q-\nu\Delta\bw=G_1\bff,
    \\
    & \partial_t \bb +\nabla\cdot G_2 \big(D_{N_1}(\bw) \otimes
    D_{N_2}(\bb)\big) - \nabla\cdot G_2\big(D_{N_2}(\bb) \otimes
    D_{N_1}(\bw)\big)- \mu\Delta\bb=0,
    \\
    &\nabla\cdot\bw = \nabla\cdot\bb =0,
    \\
    &\wit (0, \x) = G_1{\vit_0} (\x), \qquad \bb (0, \x) = G_2
    \bB_0(\x),
    \\
    &\alpha_1 >0, \qquad \alpha_2 \geq 0.
  \end{aligned}
\end{equation}
As usual, we observe that the equations~\eqref{eq:adm1} are not the
equations~\eqref{eq:MHD2} satisfied by $(\obu,\overline{\bB})$, but we
are aimed at considering~\eqref{eq:adm1} as an approximation
of~\eqref{eq:MHD2}, hence $\bw\simeq G_1\bu$ and $\bb\simeq
G_2\bB$. This is mathematically sound since, at least
\textit{formally,}
\begin{equation*}
  D_{N_i}\to A_i:=\mathrm{I}-\alpha^2_i \Delta \quad \text{ in
    the limit }N_i \to+\infty ,
\end{equation*}
hence, again formally,~\eqref{eq:adm1} will become the filtered MHD
equations~\eqref{eq:MHD2}.  The existence and uniqueness issues have
been also treated (even if without looking for estimates independent
of $N_i$) in~\cite{LT2011,LT2010} (for arbitrary deconvolution
orders). What seems more challenging is to understand whether this
convergence property is true or not, namely to show that as the
approximation parameters $N_i$ grow, then (as recently proved for the
Navier--Stokes equations in~\cite{BL2011})
\begin{equation*}
  \bw\to G_1\bu,\qquad \bb\to G_2\bB,\qquad\text{and}\qquad q\to
  G_1{q}.
\end{equation*}
%
We prove that the model~(\ref{eq:adm1}) converges, in some sense, to
the averaged MHD equations~\eqref{eq:MHD2}, when the typical scales of
filtration $\alpha_i$ remain fixed. Before analyzing such a convergence,
we need to prove more precise existence results. To this end we follow
the same approach from~\cite{BL2011}, which revisits the approach
in~\cite{DE2006} for the Navier--Stokes equations.  To be more
precise, the main result deals with $\alpha_1>0$ and $\alpha_2>0$. We
first prove (cf.~Thm.~\ref{thm:existence}) existence and uniqueness of
solutions $(\wit_N, \bb_B,q_N)$ of \eqref{eq:adm1}, with
$N=(N_1,N_2)$, such that
\begin{equation*}
 \begin{aligned}
   &\wit_N,\,\bb_N\in L^2 ([0,T]; H^2(\tore)^3) \cap L^\infty ([0,T];
   H^1(\tore)^3),
   \\
   &q_N\in L^2([0,T];W^{1,2}(\tore))\cap L^{5/3}([0,T];  W^{2, 5/3}(\tore)),
 \end{aligned}
\end{equation*}
and our main result is the following one.
\begin{theorem}
  \label{thm:Principal}
  Let $\alpha_1>0$ and $\alpha_2>0$; then, from the sequence
  $\{(\wit_N, \bb_N,q_N)\}_{N \in \N^2}$, one can extract a (diagonal)
  sub-sequence (still denoted $\{(\wit_N, \bb_N,q_N)\}_{N \in \N^2}$)
\begin{equation*}
   \begin{aligned}
     &
       \begin{aligned}
         \bw_N\to\bw
         \\
         \bb_N\to\bb
       \end{aligned}
     \quad&
     \left\{     \begin{aligned}
       &\text{weakly in }L^2 ([0,T]; H^2(\tore)^3)
       \\
        &\text{weakly$\ast$ in }L^\infty
       ([0,T]; H^1(\tore)^3),
     \end{aligned}\right.
     \\
     &
       \begin{aligned}
         \bw_N\to\bw
         \\
         \bb_N\to\bb
       \end{aligned}
     &\text{strongly in } L^p ([0,T];
     H^1(\tore)^3),\quad\forall \,1\leq p<+\infty,\qquad
     \\
     &\quad q_N\to q&\text{weakly in }L^2([0,T];W^{1,2}(\tore))\cap
     L^{5/3}([0,T]; W^{2,5/3}(\tore)),
   \end{aligned}
 \end{equation*}
such that the system
 \begin{equation}
   \label{eq:adm2}
     \begin{aligned}
    \partial_t\bw+\nabla\cdot G_1 ({A_1\bw\otimes
      A_1\bw})-\nabla\cdot G_1{(A_2\bb\otimes
      A_2\bb)}+\nabla  q -\nu\Delta\bw= G_1 \bff,
    \\
    \nabla\cdot\bw=   \nabla\cdot\bb=0,
    \\
    \partial_t\bb+\nabla\cdot G_2{(A_2\bb\otimes A_1\bw)}-\nabla\cdot
    G_2{(A_1\bw\otimes A_2\bb)}-\mu\Delta\bb=0,
    \\
    \bw (0, \x) = G_1{\vit_0} (\x), \qquad \bb (0, \x) = G_2{\bB}_0
    (\x),
  \end{aligned}
 \end{equation}
 holds in the distributional sense.  Moreover, the following energy
 inequality holds:
 \begin{equation}
   \label{eq:energy-inequality}
   \frac{1}{2}\frac{d}{dt}\big(\|A_1\wit\|^2+\|A_2\bb \|^2\big)+
   \nu\|\g A_1\wit\|^2+\mu\|\g A_2\bb\|^2\le\langle{\bf f}, A_1\wit\rangle.
 \end{equation}
\end{theorem}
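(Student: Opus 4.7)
The plan is to derive bounds on $(\bw_N,\bb_N,q_N)$ that are \emph{uniform in} $N=(N_1,N_2)$, extract a diagonal subsequence via compactness, and pass to the limit in~\eqref{eq:adm1} to identify it with~\eqref{eq:adm2}. The cornerstone is a closed energy identity: I would test the $\bw_N$-equation of~\eqref{eq:adm1} against $A_1 D_{N_1}\bw_N$ and the $\bb_N$-equation against $A_2 D_{N_2}\bb_N$, integrate, and add. Two structural facts drive the computation. First, $G_i=A_i^{-1}$ and $D_{N_i}$ commute with $\nabla$ and are self-adjoint, so $G_i A_i=\mathrm{Id}$ eliminates the outer filter and the four quadratic contributions reduce to trilinear integrals of the \emph{divergence-free} fields $D_{N_1}\bw_N$ and $D_{N_2}\bb_N$. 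Second, the MHD structure makes these sum to zero via the identity
\[
\int_{\tore}\bigl[(\bh\cdot\nabla)\bh\cdot\bk+(\bh\cdot\nabla)\bk\cdot\bh\bigr]\,d\x=\int_{\tore}(\bh\cdot\nabla)(\bh\cdot\bk)\,d\x=0\qquad(\nabla\cdot\bh=0),
\]
applied with $\bh=D_{N_2}\bb_N$, $\bk=D_{N_1}\bw_N$. Combined with the spectral inequalities $I\le D_{N_i}\le A_i$ (which follow from the van~Cittert definition~\eqref{eq:deconvolution-operator}) and a Gr\"onwall argument, this yields uniform bounds of $\bw_N,\bb_N$ in $L^\infty(0,T;H^1(\tore)^3)\cap L^2(0,T;H^2(\tore)^3)$. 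Reading the equations then controls $\partial_t\bw_N,\partial_t\bb_N$ in a negative Sobolev space, and the usual De~Rham/Ne\v{c}as argument supplies the pressure bound.

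These uniform estimates yield the weak and weak-$\ast$ limits stated, and Aubin--Lions upgrades $\bw_N$ and $\bb_N$ to strong convergence in $L^p(0,T;H^1(\tore)^3)$ for all $p<\infty$. The linear parts of~\eqref{eq:adm1} pass trivially. The delicate point is the passage to the limit in the four quadratic tensors of the form $G_i(D_{N_j}\,\cdot\,\otimes D_{N_\ell}\,\cdot\,)$, which after exploiting the self-adjointness of $G_i$ (pairing against a smoothed test function) reduces to establishing
\[
D_{N_i}\bw_N\longrightarrow A_i\bw,\qquad D_{N_i}\bb_N\longrightarrow A_i\bb\qquad\text{strongly in }L^2(0,T;L^2(\tore)^3).
\]
I would prove these via the decomposition $D_{N_i}\bw_N-A_i\bw=(D_{N_i}-A_i)\bw+D_{N_i}(\bw_N-\bw)$. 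The first summand vanishes in $L^2(0,T;L^2)$ by dominated convergence applied to the explicit symbol $A_i(k)-D_{N_i}(k)=A_i(k)\bigl((A_i(k)-1)/A_i(k)\bigr)^{N_i+1}$ (pointwise $\to 0$ and dominated by $A_i(k)$), combined with $\bw\in L^2(0,T;H^2)$; the second, although only bounded by $\|\bw_N-\bw\|_{L^2(H^2)}$ in $L^2(L^2)$, is absorbed after pairing with the smoothed test function and transferring a derivative through integration by parts, then invoking the $L^p(H^1)$ strong convergence. The energy inequality~\eqref{eq:energy-inequality} follows from the uniform estimate by weak lower semicontinuity, using that $A_i^{1/2}D_{N_i}^{1/2}\to A_i$ strongly on $H^2$-bounded sets.

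The principal obstacle is precisely this step $D_{N_i}\bw_N\to A_i\bw$: since $D_{N_i}-A_i$ does \emph{not} have uniformly small $H^2\to L^2$ norm as $N_i\to\infty$, strong convergence cannot be inferred by a naive operator-norm perturbation; it must be assembled from the pointwise Fourier decay of the symbol, the uniform $L^2(H^2)$ control on the sequence, the Aubin--Lions compactness in $L^p(H^1)$, and the regularizing action of $G_i$ on the test-function side. This is the very mechanism that breaks down when $\alpha_2=0$, and explains why the unfiltered-$\bb$ case must be treated separately in the final section.
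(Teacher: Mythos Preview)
Your overall architecture is right and matches the paper: energy identity via testing with $A_i D_{N_i}$, uniform $L^\infty(H^1)\cap L^2(H^2)$ bounds on $(\bw_N,\bb_N)$, Aubin--Lions compactness, and lower semicontinuity for the energy inequality. But there is a genuine gap in your passage to the limit in the quadratic terms.

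Your decomposition $D_{N_i}\bw_N-A_i\bw=(D_{N_i}-A_i)\bw+D_{N_i}(\bw_N-\bw)$ handles the first summand correctly, but the claim that the second ``is absorbed after pairing with the smoothed test function and transferring a derivative through integration by parts'' does not close. The operator $D_{N_i}$ is a Fourier multiplier, not a local differential operator: it does not distribute across the tensor product $D_{N_i}(\bw_N-\bw)\otimes D_{N_1}\bw_N$, so you cannot move it onto $G_1\bv$ without hitting the other factor, which is only $L^2(H^1)$--bounded. Since $\|D_{N_i}\|_{H^s\to H^s}\sim N_i+1$ while $\bw_N-\bw\to 0$ only \emph{weakly} in $L^2(H^2)$, the term $D_{N_i}(\bw_N-\bw)$ stays $O(1)$ in $L^2(L^2)$ and the product does not pass to the limit by your argument.

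What the paper does---and what you are missing---is a bound on $\partial_t D_{N_i}(\bw_N)$ and $\partial_t D_{N_i}(\bb_N)$ in $L^{4/3}([0,T];\mathbf{H}_{-1})$ that is \emph{uniform in $N$}. This is obtained by taking $D_{N_i}(\bv)$ as test function, writing $(\partial_t\bw_N, D_{N_i}\bv)=(\partial_t D_{N_i}\bw_N,\bv)$ via self-adjointness, and exploiting that in the nonlinear terms the composite $G_i\circ D_{N_i}$ has symbol $\rho_{N_i,\mathbf{k}}\le 1$ uniformly. With this time-derivative bound, Aubin--Lions applies \emph{directly to the sequence $D_{N_i}(\bw_N)$} (which is uniformly in $L^\infty(\mathbf{H}_0)\cap L^2(\mathbf{H}_1)$), giving strong convergence $D_{N_i}(\bw_N)\to \mathbf{z}_i$ in $L^p([0,T]\times\tore)$ for every $p<10/3$. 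The identification $\mathbf{z}_i=A_i\bw$ then follows from a short weak-limit argument, and the tensor products converge as strong $\times$ strong.
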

As a consequence of Theorem \ref{thm:Principal}, we deduce that the
field $(\vit, \bB,p)=(A_1\wit, A_2\bb,A_1 q)$ is a dissipative (of
Leray-Hopf's type) solution to the MHD Equations~\eqref{eq:MHD}.
\begin{remark} 
  Following the work carried out in~\cite{DL2012} about ADM without
  coupling, we conjecture that the error modeling in the case of the
  approximate deconvolution MHD is of order $N^{-1/2}$.
\end{remark}
\begin{remark} 
  The question of the boundary conditions is the great challenge in
  LES modeling, see~\cite{PB02} for a general review.  This is why,
  either from physical or mathematical viewpoint, theoretical issues
  are raised in the case of periodic boundary conditions, although the
  reality of such boundary conditions may be controversial.  In a
  paragraph in~\cite{SAK2001}, the authors outline a possible
  numerical discrete algorithm for a deconvolution procedure, by the
  finite difference method. This is proposed in the case of
  homogeneous boundary conditions, but there is no mathematical
  analysis about this method, which still remains an open problem.
\end{remark}
%
%
\bigskip

\noindent\textbf{Plan of the paper.}
In Sec.~\ref{sec:background} we introduce the notation and the
filtering operations. In Sec.~\ref{sec:existence}-\ref{sec:limit}
we consider the model with the double filtering with non-vanishing
parameters $\alpha_i$ and then we study the limiting behavior as
$N_i\to+\infty$.  Finally, in Section~\ref{sec:existence2}, we treat
the same problems in the case $\alpha_1>0$ and $\alpha_2=0$.  Since
most of the calculations are in the same spirit of those
in~\cite{BL2011}, instead of proofs at full length we just point out
the changes needed to adapt the proof valid for the NSE to the MHD
equations.
\section{Notation and Filter/Deconvolution operators}
\label{sec:background}%
This section is devoted to the definition of the functional setting
which we will use, and to the definition of the filter through the
Helmholtz equation, with the related deconvolution operator. All the
results are well-known and we refer
to~\cite{BL2011,LR2012,Lew2009} for further details.
%
%
We will use the customary Lebesgue $L^p$ and Sobolev $W^{k,p}$ and
$W^{s,2}=H^s$ spaces, in the periodic setting. Hence, we use Fourier
series on the 3D torus $\tore$.  Let be given
$L\in\R^\star_+:=\{x\in\R:\ x>0\}$, and define
$\Omega:=]0,L[^3\subset\R^3$. We denote by $({\bf e}_1, {\bf e}_2,
{\bf e}_3)$ the orthonormal basis of $\R^3$, and by $\x:=(x_1, x_2,
x_3) \in \R^3$ the standard point in $\R^3$. We put ${\cal T}_3 := 2
\pi \Z^3 /L $ and $\mathbb{T}_3$ is the torus defined by
$\mathbb{T}_3:=\left( \R^3 / {\cal T}_3\right)$.  We use $\|\cdot\|$
to denote the $L^{2}(\mathbb{T}_3)$-norm and associated operator
norms. We always impose the zero mean condition on the fields that we
consider %
and we define, for a general exponent $s\geq0$,
\begin{equation*}
 \mathbf{H}_{s} := \left\{\wit : \tore \rightarrow \R^3, \, \,
   \mathbf{w} \in H^{s}(\mathbb{T}_3)^3,   \quad\nabla\cdot\mathbf{w}
   = 0, \quad\int_{\mathbb{T}_3}\mathbf{w}\,d\x = \mathbf{0} \right\}.
\end{equation*}
%
For $\wit \in {\bf H}_s$, we can expand the fields as $ \wit
(\mathbf{x})=\sum_{\mathbf{k} \in {\cal T}_3^\star}\widehat{\wit}_{\bf
  k} e^{+i\mathbf{k \cdot x}}$, where $\mathbf{k}\in{\cal T}_3^\star${
  is the wave-number,} and the Fourier coefficients are
$\widehat{\wit}_{\bf
  k}:=\frac{1}{|\mathbb{T}_3|}\int_{\mathbb{T}_3}\wit(\x)e^{-i\mathbf{k
    \cdot x}}d\mathbf{x}$. The magnitude of $\mathbf{k}$ is defined by
$k:=|\mathbf{k}|=\{|k_{1}|^{2}+|k_{2}|^{2}+|k_{3}|^2\}^{\frac{1}{2}}$. We
define the $\mathbf{H}_{s}$ norms by $\| \wit \|^{2}_{s} :=
\sum_{\mathbf{k} \in {\cal T}_3^\star} | \mathbf{k} |^{2s} |\widehat
{\wit }_{\bf k}|^{2}$,
%
where of course $\| \wit \|^{2}_{0} := \| \wit \|^{2}$. The inner
products associated to these norms are
%
%
$  (\wit, \hbox{\bf v} )_{\mathbf{H}_s} := \sum_{\mathbf{k} \in {\cal T}_3^\star}
  | \mathbf{k} |^{2s}  \widehat
  {\wit }_{\bf k}\cdot  \overline{\widehat
    {\hbox{\bf v} }_{\bf k}}$, %
%
%
  where $\overline{\widehat{\hbox{\bf v} }_{\bf k}}$ denotes the
  complex conjugate of $\widehat{\hbox{\bf v} }_{\bf k}$.  To have
  real valued vector fields, we impose
  $\widehat{\wit}_{-\mathbf{k}}=\overline{\widehat{\wit }_{\bf k}}$
  for any $\mathbf{k}\in {\cal T}_3^\star$ and for any field denoted
  by $\wit$.
%
It can be shown (see e.g.~\cite{DG1995}) that when $s$ is an integer,
$\|\wit\|^{2}_{s}:=\| \nabla^{s} \wit\|^{2}$ and also, for general $s
\in \R$, $(\mathbf{H}_s)' = \mathbf{H}_{-s} $.
%
%

We now recall the main properties of the Helmholtz filter. In the
sequel, $\alpha>0$ denotes a given fixed number and for
$\wit\in\mathbf{H}_{s}$ the field $\witt$ is the solution of the
Stokes-like problem:
\begin{equation}\label{eq:differential_filter}
\begin{aligned}
  -\alpha^2\Delta\witt +\witt+\nabla \pi=\wit &\qquad\text{in }\tore,
  \\
  \nabla\cdot \witt=0&\qquad\text{in }\tore,
  \\
\int_{\tore}\pi \,d\x=0.
\end{aligned}
\end{equation}
%
For $\wit\in\mathbf{H}_{s}$ this problem has a unique solution
$(\witt,\pi)\in\mathbf{H}_{s+2}\times H^{s+1} (\tore)$, whose velocity
is denoted also by $\obw=G (\bw)$.
Observe that, with a common abuse of notation, for a scalar function
$\chi$ we still denote (this is a standard notation) by $\overline
\chi$ the solution of the pure Helmholtz problem
\begin{equation}
  \label{eq:TTTOB}
  A \overline \chi :=-\alpha^2 \Delta \overline \chi + \overline \chi =
  \chi \quad \text{in }\tore.
\end{equation}
In particular, in the LES model~\eqref{eq:adm1} and in the filtered
equations~\eqref{eq:MHD2}--\eqref{eq:adm2}, the symbol
``$\overline{\phantom{a\cdot}}$'' denotes the pure Helmholtz filter,
applied component-wise to the various tensor fields.

We recall now a definition that we will use several times in the
sequel.
\begin{definition}
 Let $K$ be an operator acting on ${\bf H}_s$. Assume that $\expk$
 are eigen-vectors of $K$ with corresponding eigenvalues $\widehat
 K_{\bf k}$. Then we shall say that $\widehat K_{\bf k}$ is the
 symbol of $K$.
\end{definition}
The deconvolution operator $D_N$ is constructed thanks to the
Van-Cittert algorithm by
$ D_N := \sum_{n=0}^N (\mathrm{I}-G)^n$.
%
Starting from this formula, we can express the deconvolution operator
in terms of Fourier series
$ D_N (\wit) = \sum_{ {\bf k}\in {\cal T}_3^\star}\widehat D_N ({\bf k})
 \widehat  \wit_{\bf k} e^{+i\mathbf{k \cdot x}}$,
where
\begin{equation}
  \label{eq:rep_D_N}
 \begin{aligned}
   \widehat{D}_N({\bf k})=\sum_{n=0}^N\left(\frac{\alpha^2|{\bf
         k}|^2}{1+\alpha^2|{\bf k}|^2}\right)^n= (1+\alpha ^2 |{\bf
     k}|^2)\rho_{N,{\bf k}} , \qquad \rho_{N,{\bf
       k}}=1-\left(\frac{\alpha^2 |{\bf k}|^2}{1+\alpha^2 |{\bf
         k}|^2}\right)^{N+1}.
 \end{aligned}
\end{equation}
%
The basic properties satisfied by $\widehat{D}_N$ that we will need
are summarized in the following lemma.
\begin{lemma}
  \label{lem:lower_bound}
 For each $N\in\N$ the operator $\bn:\ \mathbf{H}_{s} \to
 \mathbf{H}_{s}$ is self-adjoint, it commutes
 with differentiation, and the following properties hold true:
 \begin{eqnarray}
   & \label{eq:IINVC9} 1\leq \widehat{D}_N({\bf k})\leq N+1   & \forall\, {\bf k} \in {\cal T}_3;
   \\
   & \label{JJV34}  \displaystyle \widehat D_N ({\bf k}) \approx (N+1)
   {1+\alpha^2 |{\bf k}|^2 \over \alpha^2 |{\bf k}|^2 } & \hbox{for
     large } |{\bf k}|;
   \\
   &  \label{TT0ON}  \displaystyle  \lim_{|{\bf k}
     |\to+\infty}\widehat{D}_N({\bf k})=N+1  & \text{for fixed    }\alpha>0;
   \\
   &
   \widehat{D}_N({\bf k})\leq(1+\alpha^2|{\bf k}|^2)  &  \forall\,
   {\bf k} \in {\cal T}_3,\     \alpha>0;
   \\
   &\text{ the map }    \wit \mapsto D_N (\wit) \text{
     is an isomorphism}& \text{s.t. }
   \| D_N \|_{\mathbf{H}_s} = O(N+1)\quad \forall\,s \ge 0;
   \\
   &\label{lem:converges} \displaystyle
   \lim_{N\to+\infty} \bn(\bw)=A\bw \text{ in }\bH_s&\forall\, s \in \R \text{ and }\bw\in \bH_{s+2}.
\end{eqnarray}
\end{lemma}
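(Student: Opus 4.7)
The plan is to reduce every assertion to manipulations of the explicit symbol written down in \eqref{eq:rep_D_N}. Setting $t_{\mathbf k}:=\alpha^2|\mathbf k|^2/(1+\alpha^2|\mathbf k|^2)\in[0,1)$, the geometric sum formula gives at once
\begin{equation*}
\widehat{D}_N(\mathbf k)=\sum_{n=0}^{N}t_{\mathbf k}^{n}=\frac{1-t_{\mathbf k}^{N+1}}{1-t_{\mathbf k}}=(1+\alpha^2|\mathbf k|^2)\bigl(1-t_{\mathbf k}^{N+1}\bigr),
\end{equation*}
since $1-t_{\mathbf k}=(1+\alpha^2|\mathbf k|^2)^{-1}$. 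All subsequent assertions follow from this identity together with Parseval's theorem.

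The structural properties come first. The Stokes-like problem \eqref{eq:differential_filter} shows that the filter $G$ is a Fourier multiplier with real positive symbol $(1+\alpha^2|\mathbf k|^2)^{-1}$, hence $G$, $\mathrm{I}-G$, and each partial sum $D_N=\sum_{n=0}^{N}(\mathrm{I}-G)^n$ are self-adjoint on $\mathbf{H}_s$ and commute with every Fourier multiplier, in particular with differentiation. The bounds \eqref{eq:IINVC9} and the inequality $\widehat{D}_N(\mathbf k)\leq 1+\alpha^2|\mathbf k|^2$ can then be read off the two representations above: each term $t_{\mathbf k}^{n}$ lies in $[0,1)$ while $t_{\mathbf k}^{0}=1$, so the geometric sum lies in $[1,N+1)$, whereas $1-t_{\mathbf k}^{N+1}\leq 1$ yields the upper bound. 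For the asymptotics \eqref{JJV34}--\eqref{TT0ON}, I would set $\varepsilon:=(1+\alpha^2|\mathbf k|^2)^{-1}$ so that $t_{\mathbf k}=1-\varepsilon$ and expand $t_{\mathbf k}^{N+1}=1-(N+1)\varepsilon+O(\varepsilon^2)$ for $|\mathbf k|$ large; multiplying by $\varepsilon^{-1}=1+\alpha^2|\mathbf k|^2$ gives $\widehat{D}_N(\mathbf k)=N+1+O(\varepsilon)$, which is \eqref{TT0ON}, and the same expansion supplies \eqref{JJV34}.

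The isomorphism statement is then a matter of Parseval: because $\widehat{D}_N(\mathbf k)\in[1,N+1]$, multiplication by the symbol preserves every norm $\|\cdot\|_{s}$, with operator norm bounded by $N+1$; the inverse is the Fourier multiplier with symbol $\widehat{D}_N(\mathbf k)^{-1}\in[1/(N+1),1]$, hence bounded and injective on every $\mathbf{H}_s$.

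The only genuinely substantive point, and the step I expect to be the obstacle, is the convergence \eqref{lem:converges}. Since $A$ has symbol $1+\alpha^2|\mathbf k|^2$, the difference $A\bw-D_N\bw$ has Fourier symbol $(1+\alpha^2|\mathbf k|^2)\,t_{\mathbf k}^{N+1}$, so
\begin{equation*}
\|A\bw-D_N\bw\|_{s}^{2}=\sum_{\mathbf k\in{\cal T}_3^\star}|\mathbf k|^{2s}(1+\alpha^2|\mathbf k|^2)^{2}\,t_{\mathbf k}^{2(N+1)}\,|\widehat{\bw}_{\mathbf k}|^{2}.
\end{equation*}
For each fixed $\mathbf k\neq 0$ one has $t_{\mathbf k}<1$, whence $t_{\mathbf k}^{2(N+1)}\to 0$; moreover $t_{\mathbf k}^{2(N+1)}\leq 1$, and the hypothesis $\bw\in\mathbf{H}_{s+2}$ ensures that $|\mathbf k|^{2s}(1+\alpha^2|\mathbf k|^2)^{2}|\widehat{\bw}_{\mathbf k}|^{2}$ is summable. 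Dominated convergence on ${\cal T}_3^\star$ concludes. The role of the regularity assumption $\bw\in\mathbf{H}_{s+2}$ is precisely to furnish a summable majorant uniform in $N$; this is the only non-trivial ingredient in the lemma.
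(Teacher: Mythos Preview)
Your proof is correct and follows exactly the route the paper indicates: the paper dispatches the lemma in a single line (``All these claims follow from direct inspection of the formula~\eqref{eq:rep_D_N}''), and your argument is precisely that direct inspection carried out in full, including the dominated-convergence step for \eqref{lem:converges}. There is nothing to add.
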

All these claims follow from direct inspection of the
formula~\eqref{eq:rep_D_N} and, in the sequel, we will also use the
natural notations
$G_i:=A_i^{-1}:=(\mathrm{I}-\alpha_i^2\Delta)^{-1}$ and
\begin{equation}
  \label{eq:deconvolution-operator}
   D_{N_i} := \sum_{n=0}^{N_i} (\mathrm{I}-G_i)^n,\qquad i=1,2.
\end{equation}
%
%
\section{Existence results} \label{sec:existence}
In order to be self-contained, we start by considering the initial
value problem for the model~\eqref{eq:adm1}. In this section,
$N_1,\,N_2\in\N $ are fixed as well as $\alpha_1>0$, $\alpha_2>0$, and
we assume that the data are such that
\begin{equation}
  \label{eq:reg-initial-data}
  \vit_0, \bB_0 \in \mathbf{H}_{0}, \quad {\bf f}\in  L^2([0,T]\times\tore),
\end{equation}
which naturally yields $ G_1{\vit_0}, G_2{\bB_0} \in \mathbf{H}_{2},
\; G_1{\bf f}\in L^2([0,T]; \mathbf{H}_{2})$.  We start by defining
the notion of what we call a ``regular weak'' solution to this system.
\begin{definition}[``Regular weak'' solution]
  \label{RegSol}
 We say that the triple $(\wit, \bb, q)$ is a ``regular weak'' solution to
 system~(\ref{eq:adm1}) if and only if the three following items are
 satisfied: \smallskip

\noindent \textcolor{red}{1) \underline{\sc Regularity:} }
 \begin{eqnarray}
   && \label{eq:Reg11} \wit, \bb \in   L^2([0,T];
   \mathbf{H}_{2}) \cap C([0,T];  \mathbf{H}_{1}),
   \\
   && \label{eq:Reg12} \p_t \wit, \p_t \bb \in L^2([0,T];  \mathbf{H}_{0})
   \\
   && \label{eq:Reg13} q \in L^2([0,T]; H^1(\tore)),
 \end{eqnarray}
 \noindent\textcolor{red}{2) \underline {\sc Initial data:} }
 \begin{equation}
   \label{eq:Initial}
   \displaystyle \lim_{t \rightarrow 0}\|\wit(t, \cdot) -  G_1{\vit_0}  \|_{ \mathbf{H}_{1}} = 0, \qquad
   \lim_{t \rightarrow 0}\|\bb(t, \cdot) -  G_2{\bB_0}  \|_{ \mathbf{H}_{1}} = 0,
 \end{equation}

 \noindent\textcolor{red}{3) \underline{\sc Weak Formulation:}} For all $\vittest, \bh  \in L^2([0,T];H^1(\tore)^3)$
 \begin{eqnarray}
   &&
\label{eq:weak-formulation1}
   \begin{aligned}
     & \displaystyle\int_0^T\int_{\tore} \p_t \wit \cdot \vittest -
     \int_0^T\int_{\tore} G_1({D_{N_1} (\wit) \otimes D_{N_1}
       (\wit)}):\g\vittest
     \\
     & \qquad + \int_0^T\int_{\tore} G_1({D_{N_2} (\bb) \otimes
       D_{N_2} (\bb)}):\g\vittest +\int_0^T\int_{\tore} \g q \cdot
     \vittest
     \\
     &
     \qquad+\nu\int_0^T\int_{\tore}\g\wit:\g\vittest=\int_0^T\int_{\tore}
     G_1({\bf f})\cdot\vittest,
   \end{aligned}
   \\
      &&
  \label{eq:weak-formulation2}
      \begin{aligned}
        & \displaystyle\int_0^T\int_{\tore} \p_t \bb \cdot \bh -
        \int_0^T\int_{\tore} G_2({D_{N_2} (\bb) \otimes D_{N_1}
          (\bw)}) : \g \bh
        \\
        & \qquad + \int_0^T\int_{\tore} G_2({D_{N_1} (\bw) \otimes
          D_{N_2} (\bb)}) : \g \bh+ \mu \int_0^T\int_{\tore} \g \bb :
        \g \bh=0.
   \end{aligned}
 \end{eqnarray}
\end{definition}
Observe that, for simplicity, we suppressed all $d\bx$ and $dt$ from
the space-time integrals. With the same observations as
in~\cite{BL2011}, one can easily check that all integrals involving
$D_{N_1}\wit$ and $D_{N_2}\bb$
in~\eqref{eq:weak-formulation1}--\eqref{eq:weak-formulation2} are
finite under the regularity in \eqref{eq:Reg11}-\eqref{eq:Reg12}.
We now prove the following theorem, which is an adaption of the
existence theorem in~\cite{BL2011} and at the same time a slightly
more precise form of the various existence theorems available in
literature for doubly viscous MHD systems.
\begin{theorem}
  \label{thm:existence}
  Assume that~(\ref{eq:reg-initial-data}) holds, $0<\alpha_i \in\R$
  and $N_i \in \N$, $i=1,2$, are given and fixed. Then,
  problem~\eqref{eq:adm1} has a unique regular weak solution.
\end{theorem}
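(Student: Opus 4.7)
The plan is to adapt the Galerkin construction used for the Navier--Stokes case in \cite{BL2011}, the only new ingredient being the classical MHD cancellation between the Lorentz and the induction nonlinearities. Since $\alpha_1,\alpha_2,N_1,N_2$ are fixed, all constants below may depend on them. I would let $V_m\subset\mathbf{H}_0$ be the finite-dimensional divergence-free space spanned by $\{e^{i\mathbf{k}\cdot\x}:\,0<|\mathbf{k}|\le m\}$, with $P_m$ the associated Leray--Fourier projector. Because every operator appearing in \eqref{eq:adm1} is simultaneously diagonalised in this basis, projecting and eliminating the pressure yields a polynomial ODE for $(\bw_m,\bb_m)\in V_m\times V_m$, locally well-posed by Cauchy--Lipschitz.

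The heart of the argument is the a priori estimate obtained by testing the velocity equation against $A_1 D_{N_1}\bw_m$ and the magnetic one against $A_2 D_{N_2}\bb_m$. Since $A_i=G_i^{-1}$ commutes with $D_{N_i}$ and with $\nabla$, and since $D_{N_i}\bw_m,D_{N_i}\bb_m$ are solenoidal, the pressure gradient and the self-advection term disappear, and the viscous contributions become $\nu\|\nabla(A_1D_{N_1})^{1/2}\bw_m\|^2$ and $\mu\|\nabla(A_2D_{N_2})^{1/2}\bb_m\|^2$. A single integration by parts shows that the Lorentz contribution
\[
-\int G_1\nabla\!\cdot\!\bigl(D_{N_2}\bb_m\!\otimes\!D_{N_2}\bb_m\bigr)\cdot A_1 D_{N_1}\bw_m=\int (D_{N_2}\bb_m)_i(D_{N_2}\bb_m)_j\,\partial_i(D_{N_1}\bw_m)_j
\]
exactly cancels the second cross term of the magnetic equation, which after one integration by parts produces the same integral with opposite sign. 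Using $\widehat{D}_{N_i}\ge 1$ and $\alpha_i>0$ one controls the $\mathbf{H}_1$ norm by $\|(A_iD_{N_i})^{1/2}\cdot\|^2$ and the $\mathbf{H}_2$ norm by the viscous dissipation; a Cauchy--Schwarz/Young treatment of $\int G_1\bff\cdot A_1 D_{N_1}\bw_m$ then yields uniform bounds $\bw_m,\bb_m\in L^\infty(0,T;\mathbf{H}_1)\cap L^2(0,T;\mathbf{H}_2)$, and hence global existence of the Galerkin solutions.

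With these bounds in hand, the nonlinearities are uniformly in $L^2(0,T;\mathbf{H}_0)$ --- for instance $D_{N_i}\bw_m\in L^\infty H^1\cap L^2 H^2\hookrightarrow L^4 L^\infty$ in three dimensions --- so the equations themselves give $\partial_t\bw_m,\partial_t\bb_m\in L^2(0,T;\mathbf{H}_0)$, and Aubin--Lions provides a subsequence strongly convergent in $L^2(0,T;\mathbf{H}_1)$. Continuity of $G_i,D_{N_i}$ on each $\mathbf{H}_s$ allows passage to the limit in \eqref{eq:weak-formulation1}--\eqref{eq:weak-formulation2}. The pressure is recovered by solving $-\Delta q=\nabla\!\cdot\!\nabla\!\cdot(\cdots)$ on the torus: elliptic regularity yields $q\in L^2(0,T;H^1(\tore))$, and the improved integrability of products of $H^2$ fields (via a standard $L^{10/3}_{t,x}$-type interpolation in 3D) upgrades this to $L^{5/3}(0,T;W^{2,5/3}(\tore))$ by Calder\'on--Zygmund.

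For uniqueness, let $(\delta\bw,\delta\bb)$ be the difference of two solutions, subtract the systems, and test with $A_1 D_{N_1}\delta\bw$ and $A_2 D_{N_2}\delta\bb$. The MHD cancellation again removes one cross term; the remaining trilinear pieces, schematically $\int D_{N_i}(\delta)\cdot D_{N_j}(\text{reference})\cdot\nabla D_{N_k}(\delta)$, are controlled via the 3D embedding $\mathbf{H}_2\hookrightarrow L^\infty$ applied to the reference solutions and the $\mathbf{H}_1$ norm of the differences by $C(t)\bigl(\|\delta\bw\|_{\mathbf{H}_1}^2+\|\delta\bb\|_{\mathbf{H}_1}^2\bigr)$ with $C\in L^1(0,T)$, so Gr\"onwall closes the estimate and the pressure is then uniquely determined by the velocity equation. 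The only genuinely delicate point --- mild, but worth checking explicitly --- is that the MHD cancellation is not spoiled by the asymmetric action of two distinct filters and two distinct deconvolution orders; the computation above confirms that it is not, precisely because the \emph{same} factor $D_{N_1}\bw_m$ appears on both sides of the cancelling pair.
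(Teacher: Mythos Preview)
Your proof is correct and follows essentially the same route as the paper: Galerkin approximation, testing against $A_iD_{N_i}$ of the unknowns to exploit the MHD cancellation (the paper spells out that the remaining magnetic advection term $(D_{N_1}\bw_m\cdot\nabla)D_{N_2}\bb_m$ tested with $D_{N_2}\bb_m$ vanishes separately, which you leave implicit), Aubin--Lions compactness, and the same test functions for uniqueness---the paper uses $L^4$ interpolation where you use $\mathbf H_2\hookrightarrow L^\infty$, but either closes the Gr\"onwall loop since $N$ is fixed. Note that the $L^{5/3}(0,T;W^{2,5/3})$ pressure regularity is not part of Definition~\ref{RegSol} and is only derived later in Section~\ref{sec:limit} from the $L^{10/3}_{t,x}$ bound on $D_{N_i}(\bw_N),D_{N_i}(\bb_N)$ (coming from $L^\infty\mathbf H_0\cap L^2\mathbf H_1$, not from $H^2$), so you can omit it here.
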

In the proof we use the usual Galerkin method (see for instance the
basics for incompressible fluids in~\cite{Lio1969}) with
divergence-free finite dimensional approximate velocities and magnetic
fields. 
We also point out that Theorem~\ref{thm:existence} greatly improves
the corresponding existence result in~\cite{LT2011} and it is not a
simple restatement of those results. Some of the main original
contributions are here the estimates, uniform in $N$, that will allow
later on to pass to the limit when $N_i\to+\infty$.
%
%
%
%
  \begin{proof}[Proof of Theorem~\ref{thm:existence}]
    Let be given $\ind \in \N^\star$ and define ${\bf V}_\ind$ to be
    the following space of real valued trigonometric polynomial vector
    fields
    \begin{equation*}
      {\bf V}_\ind := \{ \wit \in  \mathbf{H}_{1}:\quad
      \int_{\tore}\wit (\x) \, \expk= {\bf 0},\quad \forall \, {\bf
        k} \,    \text{ with }\,
      |{\bf k}|>\ind \}.
  \end{equation*}
%
%
  In order to use classical tools for systems of ordinary differential
  equations, we approximate the external force $\bff$ with
  $\bff_{1/m}$ by means of Friederichs mollifiers.  Thanks to the
  Cauchy-Lipschitz Theorem, we can prove existence of $T_\ind>0$ and
  of unique $C^1$ solutions $ \wit_\ind(t,{\bf x})$ and
  $\bb_\ind(t,{\bf x})$ (belonging to $ {\bf V}_\ind$ for all
  $t\in[0,T_\ind[$) to
  \begin{eqnarray}
    \label{eq:RQPJ89}
   &&
   \begin{aligned}
     & \displaystyle\int_{\tore} \p_t \wit_\ind \cdot \vittest -
     \int_{\tore} G_1({D_{N_1} (\wit_\ind) \otimes D_{N_1} (\wit_\ind)}) :
     \g  \vittest
     \\
     & \qquad + \int_{\tore} G_1({D_{N_2} (\bb_\ind) \otimes D_{N_2} (\bb_\ind)}) : \g  \vittest
     \\
     & \qquad      + \nu \int_{\tore} \g \wit_\ind : \g  \vittest =\int_{\tore} G_1({\bf f}_{1/m}) \cdot
     \vittest,
   \end{aligned}
   \\
   &&
   \begin{aligned}
     & \displaystyle\int_{\tore} \p_t \bb_\ind \cdot \bh -
     \int_{\tore} G_2({D_{N_2} (\bb_\ind) \otimes D_{N_1} (\bw_\ind)}) : \g
     \bh
     \\
     & \qquad + \int_{\tore} G_2({D_{N_1} (\bw_\ind) \otimes D_{N_2} (\bb_\ind)}) : \g  \bh
 + \mu \int_{\tore} \g \bb_\ind : \g  \bh=0\,,
   \end{aligned}
 \end{eqnarray}
 for all $\vittest, \bh \in L^2([0,T];\mathbf{V}_m)$.
 \begin{remark}
   Instead of $(\bw_\ind,\bb_\ind)$,  a more precise and appropriate
   notation for the solution of the Galerkin system would be
  %
$ (      \bw_{\ind,N_1,N_2,\alpha_1,\alpha_2},
\bb_{\ind,N_1,N_2,\alpha_1,\alpha_2})$. %
%
We are asking for a simplification, since in this section $N_i$ and
$\alpha_i$ are fixed and the only relevant parameter is the Galerkin
one $m\in\N^\star$.
  \end{remark}
%
%
 %
  The natural and correct test functions to get \textit{a priori}
  estimates are $A_1D_{N_1} (\wit_\ind)$ for the first equation and
  $A_2 D_{N_2}(\bb_\ind)$ for the second one. Arguing as in \cite{BL2011}, it is easily checked that both are in $V_m$. Since $A_1, A_2$ are
  self-adjoint and commute with differential operators, it holds:
  \begin{equation*}
    \begin{aligned}
      &\int_{\tore} G_1({D_{N_1} (\wit_\ind)
        \otimes D_{N_1} (\wit_\ind)})
      : \g  (A_1 D_{N_1} (\wit_\ind))
      \,d\x=0,
      \\
      &\int_{\tore} G_2({D_{N_2} (\bb_\ind)
        \otimes D_{N_2} (\bb_\ind)})
      : \g  (A_2 D_{N_2} (\bb_\ind))
      \,d\x=0.
    \end{aligned}
  \end{equation*}
%
  Moreover,
  \begin{align*}
    &\int_{\tore} G_1 (D_{N_2}(\bb_\ind)\otimes D_{N_2}(\bb_\ind)):\g (A_1
    D_{N_1}(\bw_\ind)) \, d\x
    \\
    & \qquad -\int_{\tore} G_2(D_{N_2}(\bb_\ind)\otimes D_{N_1}(\bw_\ind)):\g
    (A_2 D_{N_2}(\bb_\ind)) \, d\x
    \\
    &\qquad +\int_{\tore} G_2(D_{N_1}(\bw_\ind)\otimes D_{N_2}(\bb_\ind)):\g
    (A_2 D_{N_2}(\bb_\ind)) \, d\x
    \\
    & = -\int_{\tore} (D_{N_2}(\bb_\ind) \cdot \nabla) D_{N_2}(\bb_\ind) \cdot
    D_{N_1}(\bw_\ind) \, d\x
    \\
    & \qquad + \int_{\tore} (D_{N_1}(\bw_\ind) \cdot \nabla) D_{N_2}(\bb_\ind)
    \cdot D_{N_2}(\bb_\ind) \, d\x
    \\
    & \qquad - \int_{\tore} (D_{N_2}(\bb_\ind) \cdot \nabla) D_{N_1}(\bw_\ind)
    \cdot D_{N_2}(\bb_\ind) \, d\x = 0\, .
  \end{align*}
  Summing up the equations satisfied by $\wit_\ind$ and $\bb_\ind$,
 %
%
  using standard integration by parts and
  Poincar\'e's inequality combined with Young's inequality, we obtain
  \begin{equation}\label{HHW12}
    \begin{aligned}
      &\|A_1^{\frac{1}{2}} D_{N_1}^{\frac{1}{2}}
      (\wit_\ind)(t, \cdot) \|^2 + \|A_2^{\frac{1}{2}}
      D_{N_2}^{\frac{1}{2}} (\bb_\ind)(t, \cdot) \|^2
      \\
      & \qquad+\int_0^t\|\g A_1^{\frac{1}{2}}D_{N_1}^{\frac{1}{2}}(\wit_\ind)\|^2\,d\tau
      +\int_0^t\|\g A_2^{\frac{1}{2}}D_{N_2}^{\frac{1}{2}} (\bb_\ind)\|^2\,d\tau
      \\
      &\qquad \le C ( \| \vit_0 \|, \, \| \bB_0\|, \, \nu^{-1} \| {\bf f}
      \|_{L^2 ([0,T]; \mathbf{H}_{-1}) }),
  \end{aligned}
  \end{equation}
  which shows that the natural quantities under control are
  $A_1^{\frac{1}{2}}D_{N_1}^{\frac{1}{2}} (\wit_\ind)$ and
  $A_2^{\frac{1}{2}}D_{N_2}^{\frac{1}{2}} (\bb_\ind)$.

  Since we need to prove many \textit{a priori} estimates, for the
  reader's convenience we organize the results in tables
  as~\eqref{eq:Estimates}. In the first column we have labeled the
  estimates, while the second column specifies the variable under
  concern. The third one explains the bound in terms of function
  spaces: The symbol of a space means that the considered sequence is
  bounded in such a space. %
  Finally, the fourth column states the order in terms of $\alpha$,
  $\ind$ and $N$ for each bound.
%
  \begin{equation}
    \label{eq:Estimates}
    \begin{array} {  @{}>{\vline \hfill  } c @{\,
          \, \,  }>{\vline  \hfill} c @{\, \, \,  }>{\vline \hfill } c
        @{\, \, \,  }>{\vline \, \, \,  } c   @{\hfill \vline  }}
      \hline
      \,\,  \hbox{Label} & \hbox{Variable} &\hbox{   bound          }
      \hskip 2cm ~  \hfill &      \hfill\hbox{order} \quad ~
      \\
      \hline
      \,a)\ \  \phantom{} & \, A^{\frac{1}{2}}_1D_{N_1}^{\frac{1}{2}}(
      \wit_\ind),\,A^{\frac{1}{2}}_2D_{N_2}^{\frac{1}{2}} (\bb_\ind)&
      \,L^\infty([0,T];\mathbf{H}_{0})\cap L^2([0,T];\mathbf{H}_{1})   &
      \hspace{.2cm} O(1)
      \\
      \hline b)\ \  \phantom{}& D_{N_1}^{1/2}(\wit_\ind), \,
      D_{N_2}^{1/2}(\bb_\ind)\, \,  &
      L^\infty([0,T];\mathbf{H}_{0}) \cap L^2([0,T];  \mathbf{H}_{1})   & \hspace{.2cm} O(1)
      \\
      \hline c )\ \  \phantom{}& D_{N_1}^{1/2}(\wit_\ind), \,
      D_{N_2}^{1/2}(\bb_\ind) \, \,  &
      L^\infty([0,T];\mathbf{H}_{1})\cap L^2([0,T];  \mathbf{H}_{2})   & \hspace{.2cm}
      O(\alpha^{-1})
      \\
      \hline d)\ \  \phantom{} & \wit_\ind, \, \bb_\ind\, \, \, \, \,
      &
      L^\infty([0,T];\mathbf{H}_{0})\cap L^2([0,T];\mathbf{H}_{1})   &\hspace{.2cm} O(1)
      \\
      \hline
      e )\ \  \phantom{}& \wit_\ind, \, \bb_\ind\, \, \, \, \,  &
      L^\infty([0,T];\mathbf{H}_{1})\cap L^2([0,T];\mathbf{H}_{2})   &
      \hspace{.2cm} O(\alpha^{-1})
      \\
      \hline
      f)\ \  \phantom{} & D_{N_1} (\wit_\ind), \, D_{N_2} (\bb_\ind)\,
      \, & L^\infty([0,T];\mathbf{H}_{0})\cap
      L^2([0,T];\mathbf{H}_{1})   & \hspace{.2cm} O(1)
      \\
      \hline
      g) \raisebox{3ex} \ \  \phantom{} & D_{N_1} (\wit_\ind), \, D_{N_2} (\bb_\ind)\,
      \, \, & L^\infty([0,T];\mathbf{H}_{1})\cap
      L^2([0,T];\mathbf{H}_{2})   &  O(\frac{ \sqrt{N_i+1}}{\alpha}) \,~
      \\[.5ex]
      \hline
      h)\ \  \phantom{} &\p_t \wit_\ind, \,\p_t \bb_\ind  \, \, &
      L^2([0,T];\mathbf{H}_{0})   \hspace{.2cm} ~  &  \hspace{.2cm} O(\alpha^{-1}).
      \\
      \hline
    \end{array}
  \end{equation}
  In the previous table, $\alpha=\alpha_1$ for $\bw_\ind$,
  $\alpha=\alpha_2$ for $\bb_\ind$, while in $h)$ we can take
  $\alpha:=\min\{\alpha_1, \alpha_2\}$ for both $\bw_\ind$ and
  $\bb_\ind$.

  \medskip

  {\sl Proof of~(\ref{eq:Estimates}-a) } --- This estimate follows
  directly from~\eqref{HHW12}.  Notice also that since the operator
  $A_{i}^{-{\frac{1}{2}}}D_{N_i}^{\frac{1}{2}}$ has for symbol
  $\rho_{N_i, {\bf k}}^{1/2} \le 1$, then
  $\|A_1^{\frac{1}{2}}D_{N_1}^{\frac{1}{2}}G_1{\bf f}_{1/m}\|\le
  C\|{\bf f} \|$ and also
  \begin{equation*}
    \|A_{i}^{\frac{1}{2}}D_{N_i}^{\frac{1}{2}}\mathbb{P}_{\ind} G_i{\mathbf{a}}\|=
    \|\mathbb{P}_{\ind}A_{i}^{\frac{1}{2}}D_{N_i}^{\frac{1}{2}} G_i\mathbf{a}\|\le \|
    A_i^{\frac{1}{2}}D_{N_i}^{\frac{1}{2}}\mathbf{a}\| \le \|
    \mathbf{a}\|,
  \end{equation*}
  which will be used with $\mathbf{a}=\vit_0,\,\bB_0$.

 \medskip

  {\sl Proof of~(\ref{eq:Estimates}-b)-(\ref
    {eq:Estimates}-c)} --- Let $\vittest \in {\bf H}_2 $. Then, with
  obvious notations, one has
  \begin{equation*}
    \|A_i^{\frac{1}{2}} \vittest \|^2 = \sum_{{\bf k} \in {\cal T}_3^\star }
    (1 + \alpha_i^2 | {\bf k} |^2) | \widehat \vittest_{\bf k}|^2 =
    \|\vittest \|^2 + \alpha^2_i \|\g \vittest \|^2.
  \end{equation*}
  It suffices to apply this identity to
  $\vittest=D_{N_1}^{\frac{1}{2}}(\wit_\ind),\,D_{N_2}^{\frac{1}{2}}(\bb_\ind)$
  and to $\vittest=\p_i D_{N_1}^{\frac{1}{2}} (\wit_\ind),\,\p_i
  D_{N_1}^{\frac{1}{2}} (\bb_\ind)$ ($i=1,2,3$) in~(\ref{HHW12}) to
  get the claimed result.

  \medskip

  {\sl Proof
    of~(\ref{eq:Estimates}-d)-(\ref{eq:Estimates}-e)-(\ref{eq:Estimates}-f)}
  ---These are  direct consequence
  of~~(\ref{eq:Estimates}-a)-(\ref{eq:Estimates}-b)-(\ref{eq:Estimates}-c) combined
  with~(\ref{eq:IINVC9}).

\medskip

  {\sl Proof of~(\ref{eq:Estimates}-g)} --- This follows
  directly from~(\ref{eq:Estimates}-e), together
  with~(\ref{eq:IINVC9}).
 \begin{remark}
    One crucial point is that~(\ref{eq:Estimates}-g) is valid for each
    $N=(N_1,N_2)$, but the bound may grow with $N_i$.
  \end{remark}

  \medskip

  {\sl Proof of~(\ref{eq:Estimates}-h)} --Let us take
  $\p_t \wit_\ind , \p_t \bb_\ind\in {\bf V}_\ind$ as test vector fields
  in~(\ref{eq:RQPJ89}). We get
  \begin{align*}
    & \|\p_t \wit_\ind \|^2 + \int_{\tore} G_1 \big( \nabla \cdot
    [D_{N_1} (\bw_\ind) \otimes D_{N_1} (\bw_\ind) ] \big) \cdot \p_t
    \wit_\ind
    \\
    & \quad - \int_{\tore} G_1 \big( \nabla \cdot [D_{N_2} (\bb_\ind)
    \otimes D_{N_2} (\bb_\ind) ] \big) \cdot \p_t \wit_\ind +
    \frac{\nu}{2}\frac{d}{dt} \| \g \wit_\ind \|^2 = \int_{\tore}
    G_1{\bf f}_{1/\ind} \cdot \p_t \wit_\ind,
\\
& \|\p_t \bb_\ind \|^2
    + \int_{\tore} G_2 \big( \nabla \cdot [D_{N_2} (\bb_\ind) \otimes
    D_{N_1} (\bw_\ind) ] \big) \cdot \p_t \bb_\ind
    \\
    & \quad - \int_{\tore} G_2 \big( \nabla \cdot [D_{N_1} (\bw_\ind)
    \otimes D_{N_2} (\bb_\ind) ] \big) \cdot \p_t \bb_\ind +
    \frac{\mu}{2}\frac{d}{dt} \| \g \bb_\ind \|^2 = 0\, .
  \end{align*}
  To estimate the time derivative, we need bounds on the bi-linear terms
 \begin{equation*}
   \begin{aligned}
     & {\bf A}_{N,\ind} := G_1 {\g \cdot \big(D_{N_1} (\wit_\ind)
       \otimes D_{N_1}( \wit_\ind)\big)},
     \\
     & {\bf B}_{N,\ind} := G_1 {\g \cdot \big(D_{N_2} (\bb_\ind)
       \otimes D_{N_2}( \bb_\ind)\big)},
     \\
     & {\bf C}_{N,\ind} := G_2 {\g \cdot \big(D_{N_1} (\wit_\ind)
       \otimes D_{N_2}( \bb_\ind)\big)}.
  \end{aligned}
  \end{equation*}
  Even if we have two additional terms, this can be easily done as
  in~\cite{BL2011} by observing that,
by interpolation inequalities, both $D_{N_1}(\wit_\ind)$ and
$D_{N_2}(\bb_\ind) $ belong to $L^4 ([0,T];L^3(\tore)^3)$.
Therefore,
by observing that  the operator $(\g \cdot) \circ G_i$ has
symbol corresponding to the inverse of one space derivative,
it easily follows that
${\bf A}_{N,\ind},{\bf B}_{N,\ind},{\bf C}_{N,\ind} \in L^2 ([0,T]\times \tore)^3$.
Moreover, the
bound is of order $O(\alpha_i^{-1})$ as well.

  \bigskip

%
%
  From the bounds proved in~(\ref{eq:Estimates}) and classical
  Aubin-Lions compactness tools, we can extract sub-sequences
  $\{\wit_\ind,\,\bb_\ind\}_{\ind \in \N}$ converging to $\wit,\bb \in
  L^\infty([0,T];\mathbf{H}_{1}) \cap L^2([0,T];\mathbf{H}_{2})$ and
  such that
  \begin{eqnarray}
    &
    \begin{aligned}
      \wit_\ind\rightarrow\bw
      \\
      \bb_\ind\rightarrow\bb
    \end{aligned}
    &\hbox{ weakly in }
    L^2([0,T];  \mathbf{H}_{2}),
    \\
    &\begin{aligned}\label{0HE3}
      \wit_\ind\rightarrow\bw
      \\
      \bb_\ind\rightarrow\bb
    \end{aligned}
    & \hbox{ strongly in }
    L^p([0,T];  \mathbf{H}_{1}) ,\quad \forall\,  p\in[1,\infty[,
    \\
    &     \begin{aligned}
      \p_t   \wit_\ind\rightarrow\p_t\bw
      \\
      \p_t\bb_\ind\rightarrow\p_t\bb
    \end{aligned}
    &\hbox{ weakly in } L^2([0,T];
    \mathbf{H}_{0}).
  \end{eqnarray}
  
  This already implies that $(\wit,\bb)$
  satisfies~(\ref{eq:Reg11})-(\ref{eq:Reg12}). From~(\ref{0HE3}) and
  the continuity of $D_{N_i}$ in ${\bf H}_s$, we get strong convergence
  of $D_{N_1} (\wit_\ind),\,D_{N_2}(\bb_\ind)$ in $L^4
  ([0,T]\times\tore)$, hence the convergence of the corresponding bi-linear
  products in $L^2 ([0,T] \times \tore)$. This proves that
for all $\vittest, \bh \in L^2([0,T] ; {\bf    H}_1)$
\begin{eqnarray}
  \label{eq:VARFORM}
   &&
   \begin{aligned}
     & \displaystyle\int_0^T\int_{\tore} \p_t \wit\cdot\vittest-
     \int_0^T\int_{\tore} G_1({D_{N_1} (\wit) \otimes D_{N_1}
       (\wit)}):\g\vittest
     \\
     & \qquad + \int_0^T\int_{\tore} G_1({D_{N_2} (\bb) \otimes
       D_{N_2} (\bb)}) : \g \vittest
     \\
     & \qquad+ \nu \int_0^T\int_{\tore} \g \wit : \g \vittest=
     \int_0^T\int_{\tore} G_1({\bf f}) \cdot \vittest,
   \end{aligned}
\\
   &&
   \begin{aligned}
     & \displaystyle\int_0^T\int_{\tore} \p_t \bb \cdot \bh -
     \int_0^T\int_{\tore} G_2({D_{N_2} (\bb) \otimes D_{N_1} (\bw)}) :
     \g  \bh
     \\
     & \qquad + \int_0^T\int_{\tore} G_2({D_{N_1} (\bw) \otimes D_{N_2} (\bb)}) : \g  \bh
     + \mu \int_0^T\int_{\tore} \g \bb : \g  \bh=0.
   \end{aligned}
 \end{eqnarray}
 To introduce the pressure, observe that taking the divergence of the
 equation for $\bw$, we get

  \begin{equation}
    \label{eq:pressure}
    \Delta q = \nabla \cdot G_1\bff+\g \cdot \mathcal{A}_N,
  \end{equation}
  for $ \mathcal{A}_N := -G_1 \big[ \g \cdot \big(D_{N_1} (\wit)
  \otimes D_{N_1} (\wit)\big) - \g \cdot \big(D_{N_2} (\bb) \otimes
  D_{N_2} (\bb)\big) \big]$.  A fairly standard application of De
  Rham's Theorem shows existence of $q$, and the regularity of $
  \mathcal{A}_N $ yields $q \in L^2 ([0,T]; H^1(\tore))$.

  The meaning in which the initial data are taken is completely
  standard and we end the proof by showing uniqueness: Let
  $(\wit_1,\bb_1)$ and $(\wit_2,\bb_2)$ be two solutions corresponding
  to the same data $(\bu_0,\bB_0,\bff)$ and let us define, as usual,
  $\mathcal{W}:=\wit_1-\wit_2$ and $\mathcal{B}:=\bb_1-\bb_2$. By
  standard calculations (mimicking those employed in~\cite{BL2011}),
  we get
 \begin{gather}
    \label{eq:56LKP0}
    \begin{align*}
      \displaystyle \frac{1}{2}\frac{d}{dt} & \Big(
      \|A_1^{\frac{1}{2}} D_{N_1}^{\frac{1}{2}}( \mathcal{W})\|^2 +
      \|A_2^{\frac{1}{2}} D_{N_2}^{\frac{1}{2}}( \mathcal{B}) \|^2
      \Big) +\nu \|\g A_1^{\frac{1}{2}}
      D_{N_1}^{\frac{1}{2}}(\mathcal{W})\|^2 +\mu \| \g
      A_2^{\frac{1}{2}}D_{N_2}^{\frac{1}{2}}( \mathcal{B} )\|^2
      \\
       &=\int_{\tore} (D_{N_2}(\mathcal{B})\cdot\g) D_{N_2} (\bb_1)
      \cdot D_{N_1} (\mathcal{W}) -\int_{\tore}
      (D_{N_1}(\mathcal{W})\cdot\g) D_{N_1}(\bw_1) \cdot
      D_{N_1}(\mathcal{W})
      \\
      &\  +\int_{\tore} (D_{N_2}(\mathcal{B})\cdot\g) D_{N_1}(\bw_1)
      \cdot D_{N_2}(\mathcal{B}) -\int_{\tore}
      (D_{N_1}(\mathcal{W})\cdot\g) D_{N_2} (\bb_1) \cdot
      D_{N_2}(\mathcal{B}),
      \\
       &\leq 2 \| D_{N_2}(\mathcal{B}) \|_{L^4} \| D_{N_1}
      \mathcal{W} \|_{L^4} \| \g D_{N_2} (\bb_1) \|_{L^2} + \| D_{N_1}
      \mathcal{W}\|^2_{L^4} \| \g D_{N_1}(\bw_1) \|_{L^2},
      \\
      &\  + \| D_{N_2}(\mathcal{B}) \|^2_{L^4} \| \g D_{N_1}(\bw_1)
      \|_{L^2}
      \\
       & \leq 2 \| D_{N_2}(\mathcal{B})\|^{1/4}
      \|D_{N_1}(\mathcal{W})\|^{1/4} \| \g
      D_{N_2}(\mathcal{B})\|^{3/4} \|\g D_{N_1}(\mathcal{W}) \|^{3/4}
      \| \g D_{N_2}(\bb_1) \|
      \\
      &\  + \| D_{N_1}(\mathcal{W})\|^{1/2} \|\g
      D_{N_1}(\mathcal{W})\|^{3/2} \| \g D_{N_1}(\bw_1)\| +
      \|D_{N_2}(\mathcal{B})\|^{1/2} \|\g D_{N_2}(\mathcal{B}) \|^{3/2}\| \g D_{N_1}(\bw_1) \|\, .
      \end{align*}
  \end{gather}
 By using $\|D_{N_i}\| = (N_i+1)$, the bound of $\wit_1, \bb_1$ in
  $L^\infty([0,T]; {\bf H}_1)$, and Young's inequality, we obtain
  \begin{equation*}
    \begin{array}{l}
      \displaystyle \frac{1}{2}{d \over  dt}  \big( \| A_1^{\frac{1}{2}} D_{N_1}^{\frac{1}{2}}( \mathcal{W}) \|^2 +
      \| A_2^{\frac{1}{2}} D_{N_2}^{\frac{1}{2}}( \mathcal{B}) \|^2
      \big) +\frac{\nu}{2} \| \g A_1^{\frac{1}{2}} D_{N_1}^{\frac{1}{2}}( \mathcal{W} )\|^2 
      +\frac{\mu}{2} \| \g A_2^{\frac{1}{2}} D_{N_2} ^{\frac{1}{2}}( \mathcal{B} )\|^2
      \\
      \displaystyle \quad \leq{ C (N_1+1)^4 \Big(\sup_{t \ge 0} \| \g
        \wit_1 \|^4 \Big)} \Big[ \frac{1}{\nu^3} \| A_1^{\frac{1}{2}}
      D_{N_1}^{\frac{1}{2}} (\mathcal{W}) 
      \|^2 + \frac{1}{\mu^3} \| A_2^{\frac{1}{2}}
      D_{N_2}^{\frac{1}{2}} (\mathcal{B}) \|^2 \Big] 
      \\
      \displaystyle \qquad +{ C (N_2+1)^4 \Big(\sup_{t \ge 0} \| \g
        \bb_1 \|^4 \Big)} \frac{1}{\nu^{3/2}\mu^{3/2}}\big[  \|
      A_1^{\frac{1}{2}}  D_{N_1}^{\frac{1}{2}} (\mathcal{W}) \|^2 + \|
      A_2^{\frac{1}{2}}  D_{N_2}^{\frac{1}{2}} (\mathcal{B}) \|^2
      \big] . 
    \end{array}
  \end{equation*}
  In particular, we get
  \begin{equation*}
    \frac{1}{2}\frac{d}{dt}  \big( \| A_1^{\frac{1}{2}}D_{N_1}^{\frac{1}{2}}( \mathcal{W}) \|^2+
    \| A_2^{\frac{1}{2}}D_{N_2}^{\frac{1}{2}}( \mathcal{B})\|^2 \big)
    \leq M \big[\|A_1^{\frac{1}{2}}D_{N_1}^{\frac{1}{2}} (\mathcal{W})\|^2+
    \|A_2^{\frac{1}{2}}D_{N_2}^{\frac{1}{2}} (\mathcal{B})\|^2 \big] ,
  \end{equation*}
where
 \begin{gather*}
   M := C \big( \max\big\{ \frac{1}{\nu}, \frac{1}{\mu} \big\} \big)^3
   \Big[ (N_1+1)^4 \Big(\sup_{t \ge 0} \| \g \wit_1 \|^4 \Big) +
   (N_2+1)^4 \Big(\sup_{t \ge 0} \| \g \bb_1 \|^4 \Big) \Big] .
 \end{gather*}
 Since the initial values $\mathcal{W}(0)=\mathcal{B}(0)$ are
 vanishing, we deduce from Gronwall's Lemma that
 $A_1^{\frac{1}{2}}D_{N_1}^{\frac{1}{2}}(\mathcal{W})=
 A_2^{\frac{1}{2}}D_{N_2}^{\frac{1}{2}}(\mathcal{B})={\bf 0}$ and we
 conclude that $\mathcal{W}=\mathcal{B}={\bf 0}$.
\end{proof}
\begin{remark}
  \label{Rem:En-Approx}
The same calculations show also that the following energy equality is
satisfied
  \begin{equation*}
    \begin{aligned}
      & {\frac{1}{2}}\frac{d}{dt} \big( \|
      A_1^{\frac{1}{2}}D_{N_1}^{\frac{1}{2}}( \wit)\|^2 + \|
      A_2^{\frac{1}{2}}D_{N_2}^{\frac{1}{2}}(\bb)\|^2 \big) +\nu\| \g
      A_1^{\frac{1}{2}}D_{N_1}^{\frac{1}{2}}(\wit) \|^2 +\mu\| \g
      A_2^{\frac{1}{2}}D_{N_2}^{\frac{1}{2}}(\bb) \|^2
      \\
      & \qquad\qquad = ( A_2^{\frac{1}{2}}D_{N_1}^{\frac{1}{2}}\big( G_1 {\bf
        f}), A_1^{\frac{1}{2}}D_{N_1}^{\frac{1}{2}}( \wit)
      \big).
    \end{aligned}
  \end{equation*}
  As we shall see in the sequel, it seems that it is not possible to
  pass to the limit $N\to+\infty$ directly in this ``energy equality''
  and some work to obtain an ``energy inequality'' is needed.
\end{remark}
\section{Passing to the limit when $N \to \infty$}
\label{sec:limit}
The aim of this section is the proof of the main result of the paper.
For a given $N\in\N$, we denote by $(\wit_N, \bb_N, q_{N})$ the unique
``regular weak'' solution to Problem~\ref{eq:adm1}, where
$N=\min\{N_1,N_2\}\rightarrow+\infty$.  For the sake of completeness
and to avoid possible confusion between the Galerkin index $\ind$ and
the deconvolution index $N$, we write again the system:
\begin{gather}
  \label{eq:adm-N}
  \begin{split}
    &\partial_t \wit_N +\nabla\cdot G_1 ( {D_{N_1}(\wit_N) \otimes
      D_{N_1}(\wit_N)}) -\nabla\cdot G_1 ( {D_{N_2}(\bb_N) \otimes
      D_{N_2}(\bb_N)})
    \\
    & \hspace{5cm} +\nabla q_{N} - \nu\Delta\wit_N = G_1 \bff \qquad
    \hbox{in} \quad [0,T]\times \tore,
    \\
    \\
    &\partial_t \bb_N +\nabla\cdot G_2 ( {D_{N_2}(\bb_N) \otimes
      D_{N_1}(\wit_N)}) -\nabla\cdot G_2 ( {D_{N_1}(\bw_N) \otimes
      D_{N_2}(\bb_N)})
    \\
    & \hspace{6cm} - \mu\Delta\bb_N =0\qquad \hbox{in} \quad
    [0,T]\times \tore,
    \\
    \\
    &\nabla\cdot\wit_N = \nabla\cdot \bb_N = 0 \qquad \hbox{in} \quad
    [0,T]\times \tore ,
    \\
    &(\wit_N, \bb_N) (0, \x) = (G_1{\vit_0}, G_2 \bB_0) (\x) \qquad
    \hbox{in} \quad \tore. \hspace{1.15cm} \phantom{1}
 \end{split}
\end{gather}
More precisely, for all fixed scales $\alpha_1, \alpha_2>0$, we set
\begin{equation*}
\bw_N=\lim_{\ind\to+\infty}\bw_{\ind,N_1,N_2,\alpha_1, \alpha_2}
\end{equation*}
 and similarly for
$\bb_N$.

\begin{proof}[Proof of Thm.~\ref{thm:Principal}]
  We look for additional estimates, uniform in $N$, to get compactness
  properties about the sequences $\{D_{N_1} (\wit_N),D_{N_2}
  (\bb_N)\}_{N\in\N}$ and $\{\wit_N,\bb_N\}_{N\in\N}$. We then prove
  strong enough convergence results in order to pass to the limit in
  the equation~\eqref{eq:adm-N}, especially in the nonlinear terms.
%
  With the same notation of the previous section, we quote in the
  following table the estimates that we will use for passing to the
  limit. The Table~(\ref{eq:Estimates22}) is organized
  as~\eqref{eq:Estimates} and $\alpha=\min\{\alpha_1,\alpha_2\}$.
  \begin{equation}
    \label{eq:Estimates22}
    \begin{array}
      {  @{}>{\vline \hfill  } c @{\, \, \,  }>{\vline
          \hfill} c @{\, \, \,  }>{\vline \hfill } c @{\, \, \,
        }>{\vline \, \, \,  } c   @{\hfill \vline  }}
      \hline
      \,\,  \hbox{Label} & \hbox{Variable} &\hbox{   bound          } \hskip 2cm ~  \hfill &
      \hfill\hbox{order} \quad ~
      \\
      \hline a & \wit_N, \, \bb_N\, \, \, \, \,  &
      L^\infty([0,T];\mathbf{H}_{0}) \cap L^2([0,T];\mathbf{H}_{1})
      \hskip 0,5 cm ~
      &\, O(1)
      \\
      \hline
      b & \wit_N ,\,\bb_N \, \, \, \, \,  & L^\infty([0,T];
      \mathbf{H}_{1})\cap L^2([0,T];\mathbf{H}_{2})   \hskip 0,5 cm ~&
      \,
      O(\alpha^{-1})
      \\
      \hline \, \,   c & \,\,D_{N_1} (\wit_N), \,D_{N_2} (\bb_N) \, \, &
      L^\infty([0,T];\mathbf{H}_{0})\cap L^2([0,T];\mathbf{H}_{1}) \, &
      \, O(1)
      \\
      \hline \, \,   d &\p_t \wit_N, \,\p_t \bb_N \, \, &\, \,
      L^2([0,T]\times\tore)^3   \hskip 1,2cm ~  & \,
      O(\alpha^{-1} )
      \\
      \hline \, \,   e & q_N \, \, \, \, \, \,  &\,
      L^{2}([0,T];  H^1(\tore)) \cap L^{5/3}([0,T];  W^{2, 5/3}(\tore)) \,
      &\, O(\alpha^{-1} )
      \\
      \hline \, \,   f &\, \, \, \p_t D_{N_1}(\wit_N), \, \p_t
      D_{N_2}(\bb_N)  \, \, &\, \,   L^{4/3}([0,T];{\bf H}_{-1})   \hskip
      1,2cm ~  & \, O(1 )
      \\
      \hline
    \end{array}
  \end{equation}
  Estimates~(\ref{eq:Estimates22}-a),~(\ref{eq:Estimates22}-b),~(\ref{eq:Estimates22}-c),
  and~(\ref{eq:Estimates22}-d) have already been obtained in the
  previous section. Therefore, we just have to
  check~(\ref{eq:Estimates22}-e) and~(\ref{eq:Estimates22}-f).

  \medskip

  {\sl Proof of~(\ref{eq:Estimates22}-e) } --- To obtain further
  regularity properties of the pressure we use again~
  \eqref{eq:pressure}.
  We already know from the estimates proved in the previous section
  that $\mathcal{A}_N \in L^2([0,T] \times \tore)^3$. Moreover,
  classical interpolation inequalities combined
  with~(\ref{eq:Estimates22}-c) yield
  $D_{N_1}(\wit_N),D_{N_2}(\bb_N)\in
  L^{10/3}([0,T]\times\tore)$. Therefore, $\mathcal{A}_N\in
  L^{5/3}([0,T]; W^{1, 5/3}(\tore))$. Consequently, we obtain the
  claimed bound on $q_N$.

  \medskip

  {\sl Proof of~(\ref{eq:Estimates22}-f) } --- Let be given
  $\vittest,\bh\in L^4 ([0,T];{\bf H}_1)$. We use
  $D_{N_1}(\vittest),D_{N_2}(\bh)$ as test functions.  By using that
  $\p_t \wit,\,\p_t\bb \in L^2 ([0,T] \times \tore)^3$, $D_{N_i}$
  commute with differential operators, $G_i$ and $D_{N_i}$ are
  self-adjoint, and classical integrations by parts, we get
  \begin{equation*}
    \begin{aligned}
      (\p_t \wit_N,\, D_{N_1} (\vittest))& = (\p_t D_{N_1}(\wit_N),
      \vittest)
      \\
      &=\nu (\Delta \wit_N, D_{N_1} (\vittest)) + ( D_{N_1} (\wit_N)
      \otimes D_{N_1} (\wit_N), G_1 {D_{N_1} (\g \vittest)})
      \\
      & \quad - ( D_{N_2} (\bb_N) \otimes D_{N_2} (\bb_N), G_1 {D_{N_1} (\g
        \vittest)}) + (D_{N_1} (G_1{ \bf f}), \vittest),
      \\
      \\
      (\p_t \bb_N,\, D_{N_2} (\bh)) &= (\p_t D_{N_2}(\bb_N), \bh)
      \\
      &=\mu (\Delta \bb_N, D_{N_2} (\bh))+(D_{N_2} (\bb_N)\otimes
      D_{N_1}(\wit_N),G_2 {D_{N_2} (\g\bh)})
      \\
      &\quad - ( D_{N_1} (\wit_N) \otimes D_{N_2} (\bb_N), G_2 {D_{N_2}
        (\g\bh)})\, .
    \end{aligned}
  \end{equation*}
  We first observe that
  \begin{equation*}
    \begin{aligned}
      |(\Delta \wit_N, D_{N_1} (\vittest))| = |( \g D_N (\wit_N) ,\g
      \vittest ) | \le C_1 (t) \| \vittest \|_1,
      \\
      |(\Delta \bb_N, D_{N_2} (\bh))| = |( \g D_{N_2} (\bb_N) ,\g \bh
      ) | \le C_1 (t) \| \bh \|_1,
    \end{aligned}
  \end{equation*}
  and that the $L^2([0,T]; H^1(\tore)^3)$ bound for
  $D_{N_1}(\wit_N),\,D_{N_2}(\bb_N)$ implies that $C_1(t) \in
  L^2([0,T])$, uniformly with respect to $N\in\N$.
%
  Therefore, when we combine the latter estimates with the properties
  of $D_{N_i}$ we get, %
 uniformly in $N$,
  \begin{equation*}
    \begin{aligned}
      & |(\p_t D_{N_1}(\wit_N), \vittest)|+ |(\p_t D_{N_2}(\bb_N),
      \vittest)|
      \\
      &\qquad \leq \big( \nu C_1(t)+C_2(t)\big)\|\vittest\|_1+\big(
      \mu C_1(t)+C_2(t)\big) \|\bh\|_1+\|\bff(t,\cdot)\|\in
      L^{4/3}(0,T).
    \end{aligned}
  \end{equation*}
%


%
From the
  estimates~\eqref{eq:Estimates22} and classical rules of functional
  analysis, we can infer that there exist
  \begin{equation*}
    \begin{aligned}
      &\wit, \bb \in L^\infty([0,T]; \mathbf{H}_{1}) \cap
      L^2([0,T];\mathbf{H}_{2}),
      \\
      &{\bf z}_1, {\bf z}_2 \in L^\infty([0,T]; \mathbf{H}_{0}) \cap
      L^2([0,T];\mathbf{H}_{1}),
      \\
      &q\in L^2 ([0,T]; H^1(\tore))\cap
      L^{5/3}([0,T];W^{2,5/3}(\tore))
    \end{aligned}
  \end{equation*}
  such that, up to sub-sequences,
  \begin{equation}
    \label{conv:res}
    \begin{array}{ll}
      \begin{array}{l}
        \wit_N \rightarrow \wit
        \\
        \bb_N \rightarrow \bb
      \end{array}
      &
      \left \{
        \begin{array} {l}
          \hbox{weakly in }  L^2([0,T];  \mathbf{H}_{2}),
          \\
          \hbox{weakly$\ast$ in }  L^\infty([0,T];  \mathbf{H}_{1}),
          \\
          \hbox{strongly in } L^p([0,T]; {\bf H}_1) \qquad \forall \, p\in[1,  \infty[,
        \end{array}
      \right.
      \\
      \\
      \begin{array}{l}
        \p_t \wit_N \rightarrow \p_t \wit
        \\
        \p_t \bb_N \rightarrow \p_t \bb
      \end{array}
      & \hbox{weakly in }
      L^2([0,T] \times \tore),
      \\
      \\
      \begin{array}{l}
        D_{N_1}(\wit_N) \rightarrow {\bf z}_1
        \\
        D_{N_2}(\bb_N) \rightarrow {\bf z}_2
      \end{array}
      &
      \left \{
        \begin{array} {l}
          \hbox{weakly in }  L^2([0,T];  \mathbf{H}_{1}),
          \\
          \hbox{weakly$\ast$ in }  L^\infty([0,T];  \mathbf{H}_{0}),
          \\
          \hbox{strongly in } L^p([0,T] \times \tore)^3 \qquad
          \forall \, p\in[1,  10/3[, 
        \end{array}
      \right.
      \\
      \\
      \begin{array}{l}
        \p_t D_{N_1} (\wit_N) \rightarrow \p_t {\bf z}_1
        \\
        \p_t D_{N_2} (\bb_N) \rightarrow \p_t {\bf z}_2
      \end{array}
      & \hbox{weakly in }
      L^{4/3}([0,T];\mathbf{H}_{-1}),
      \\
      \\
      \begin{array}{l}
        q_{N} \rightarrow q
      \end{array}
      & \hbox{weakly in } L^2([0,T];
      H^1(\tore))\cap L^{5/3}([0,T];  W^{2, 5/3}(\tore)).
    \end{array}
  \end{equation}
  We notice that
  \begin{equation}
    \label{eq:NonLin:8}
    \begin{aligned}
      & D_{N_1}(\wit_N) \otimes D_{N_1}(\wit_N) \longrightarrow {\bf z}_1
      \otimes {\bf z}_1 \quad \hbox{strongly in } L^p([0,T] \times
      \tore)^9 \quad \forall \, p\in[1,  5/3[,
      \\
      & D_{N_2}(\bb_N) \otimes D_{N_2}(\bb_N) \longrightarrow {\bf
        z}_2 \otimes {\bf z}_2 \quad \hbox{strongly in } L^p([0,T]
      \times \tore)^9 \quad \forall \, p\in[1,  5/3[,
      \\
      &D_{N_1}(\wit_N) \otimes D_{N_2}(\bb_N) \longrightarrow {\bf z}_1
      \otimes {\bf z}_2 \quad \hbox{strongly in } L^p([0,T] \times
      \tore)^9 \quad \forall \, p\in[1,  5/3[,
    \end{aligned}
  \end{equation}
  while all other terms in the equation pass easily to the limit as
  well. By using the same identification of the limit used
  in~\cite{BL2011}, we can easily check that ${\bf z}_1=A_1\wit$ and
  ${\bf z}_2=A_2\bb$, ending the proof.
\end{proof}
By using well established results on semicontinuity and adapting
calculations well-known for the NSE, we can prove that the solution
$(\bw,\bb)$ satisfies an ``energy inequality.''
\begin{proposition}
  Let be given $\bu_0, \bB_0 \in \mathbf{H}_{0}$, $\bff\in
  L^2([0,T];\mathbf{H}_{0})$, and let
   $\{(\bw_N, \bb_N, q_{N})\}_{N\in\N}$
 be a (possibly relabelled) sequence of regular weak solutions
  converging to a weak solution $(\bw, \bb, q)$ of the filtered MHD
  equations. Then $(\bw, \bb)$ satisfies the energy inequality~\eqref{eq:energy-inequality}
 in the sense of distributions
(see also~\cite{CF1988,FMRT2001a,Tem2001}). This
     implies that $(\bw, \bb)$ is the average of a weak (in the sense
     of Leray-Hopf) or dissipative solution $(\bu, \bB)$ of the MHD
     equation~\eqref{eq:MHD}. In fact, the energy inequality can also
     be read as
 \begin{equation*}
   \frac{1}{2}{d \over  dt}(\| \bu \|^2 +\|\bB \|^2) + \nu \| \g \bu \|^2
   + \mu \| \g \bB \|^2 \le \langle{\bf f},  \vit\rangle.
 \end{equation*}
\end{proposition}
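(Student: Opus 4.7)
The plan is to pass to the limit $N\to\infty$ in the exact energy equality of Remark~\ref{Rem:En-Approx}, tested against non-negative $\phi\in C_c^\infty((0,T))$. Setting
\begin{equation*}
E_N:=\|A_1^{1/2}D_{N_1}^{1/2}\bw_N\|^2+\|A_2^{1/2}D_{N_2}^{1/2}\bb_N\|^2,\qquad \mathcal D_N:=\nu\|\g A_1^{1/2}D_{N_1}^{1/2}\bw_N\|^2+\mu\|\g A_2^{1/2}D_{N_2}^{1/2}\bb_N\|^2,
\end{equation*}
and using the commutation and self-adjointness of $A_1,D_{N_1},G_1$ together with $A_1G_1=\mathrm{I}$ to simplify the forcing, the identity of Remark~\ref{Rem:En-Approx} reads, after multiplication by $\phi$ and integration by parts in time,
\begin{equation*}
-\tfrac{1}{2}\int_0^T\phi'(t)E_N(t)\,dt+\int_0^T\phi(t)\mathcal D_N(t)\,dt=\int_0^T\phi(t)\bigl(\bff(t),D_{N_1}\bw_N(t)\bigr)\,dt.
\end{equation*}

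For the dissipative term I would invoke weak lower semicontinuity. Since $A_i^{1/2}D_{N_i}^{1/2}$ commutes with $\g$ and has Fourier symbol $(1+\alpha_i^2|\bk|^2)\rho_{N_i,\bk}^{1/2}$, which tends pointwise to the symbol of $A_i$, the weak convergence $\bw_N\rightharpoonup\bw$ in $L^2([0,T];\bH_2)$ from~\eqref{conv:res} implies $\g A_i^{1/2}D_{N_i}^{1/2}\bw_N\rightharpoonup\g A_i\bw$ weakly in $L^2([0,T]\times\tore)$, and similarly for $\bb_N$. With $\phi\ge 0$, weak lower semicontinuity of the $L^2$ norm then yields $\liminf_N\int_0^T\phi\,\mathcal D_N\,dt\ge\int_0^T\phi(\nu\|\g A_1\bw\|^2+\mu\|\g A_2\bb\|^2)\,dt$.

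The kinetic-energy term rests on the identity $\|A_i^{1/2}D_{N_i}^{1/2}\bw_N\|^2=(D_{N_i}\bw_N,A_i\bw_N)$, which combines the \emph{strong} convergence $D_{N_i}\bw_N\to A_i\bw$ in $L^2([0,T]\times\tore)$ with the \emph{weak} convergence $A_i\bw_N\rightharpoonup A_i\bw$ in the same space, both provided by~\eqref{conv:res}. Strong-times-weak yields convergence of the product in $L^1([0,T]\times\tore)$, whence $\int_0^T\phi' E_N\,dt\to\int_0^T\phi'(\|\bu\|^2+\|\bB\|^2)\,dt$ with $\bu:=A_1\bw$ and $\bB:=A_2\bb$. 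The forcing term passes to the limit analogously, thanks to the same strong convergence $D_{N_1}\bw_N\to\bu$ in $L^2$. Collecting these three passages produces the distributional form of~\eqref{eq:energy-inequality}; applying $A_1$, $A_2$ to the limit system~\eqref{eq:adm2} (and using $A_iG_i=\mathrm{I}$) then identifies $(\bu,\bB)$ as a dissipative (Leray--Hopf type) solution of~\eqref{eq:MHD}.

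The main obstacle I foresee is the identification of the limit of $E_N$: because $A_i^{1/2}D_{N_i}^{1/2}$ depends non-trivially on $N$, weak convergence of $\bw_N$ alone does not suffice, and one genuinely needs the \emph{strong} compactness of $\{D_{N_i}(\bw_N)\}_N$ in $L^2([0,T]\times\tore)$ coming from the Aubin--Lions argument based on the pair of bounds~(\ref{eq:Estimates22}-c) and~(\ref{eq:Estimates22}-f). This is also why the equality at finite $N$ degenerates into a genuine inequality in the limit, the dissipation being the only term accessible only through weak lower semicontinuity.
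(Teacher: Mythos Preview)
Your argument is correct and follows the same overall scheme as the paper: start from the exact energy equality of Remark~\ref{Rem:En-Approx}, pass to the limit in the forcing term, and use weak lower semicontinuity for the dissipation. The paper works with the integrated form on $[0,t]$ (handling the initial data separately via $\bw_N(0)=G_1\vit_0\in\bH_2$ and $D_{N_i}\to A_i$ on $\bH_2$), treats \emph{both} the kinetic term at time $t$ and the dissipation via semicontinuity, and invokes the weak convergence $D_{N_i}^{1/2}(\bw_N)\rightharpoonup A_i^{1/2}\bw$ in $L^2([0,T];\bH_1)$ for the identification of limits. Your distributional packaging (testing with $\phi\in C_c^\infty((0,T))$) is equivalent but sidesteps the pointwise-in-$t$ issues. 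The genuinely different move is your treatment of the kinetic term: rewriting $\|A_i^{1/2}D_{N_i}^{1/2}\bw_N\|^2=(D_{N_i}\bw_N,A_i\bw_N)$ and pairing the \emph{strong} convergence $D_{N_i}\bw_N\to A_i\bw$ from~\eqref{conv:res} with the \emph{weak} convergence $A_i\bw_N\rightharpoonup A_i\bw$ gives actual convergence of $\int\phi' E_N$, so only the dissipation produces the inequality. This is cleaner than relying on semicontinuity for $E_N$.

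One small point to tighten: your sentence ``the weak convergence $\bw_N\rightharpoonup\bw$ in $L^2([0,T];\bH_2)$ implies $\g A_i^{1/2}D_{N_i}^{1/2}\bw_N\rightharpoonup\g A_i\bw$'' is not literally justified that way, since the operator $\g A_i^{1/2}D_{N_i}^{1/2}$ has $N$-dependent, unbounded norm from $\bH_2$ to $\bH_0$. The conclusion is nonetheless correct: the sequence $A_i^{1/2}D_{N_i}^{1/2}\bw_N$ is bounded in $L^2([0,T];\bH_1)$ directly by~(\ref{eq:Estimates}-a), so it has a weak limit, and the identification with $A_i\bw$ follows by duality (move $A_i^{1/2}D_{N_i}^{1/2}$ onto a finite-mode test function and use the pointwise convergence of symbols you mention, or equivalently use the paper's $D_{N_i}^{1/2}\bw_N\rightharpoonup A_i^{1/2}\bw$).
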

\begin{proof}
The proof is a straightforward adaption of the one in~\cite{BL2011}. We
start from the energy equality for the approximate model as in
Remark~\ref{Rem:En-Approx} and we observe that
the same arguments as before show also that
 \begin{equation*}
   \left\{
     \begin{aligned}
       D_{N_1}^{1/2}(\bw_N)\to A_1^{1/2}(\bw)
       \\
       D_{N_2}^{1/2}(\bb_N)\to A_2^{1/2}(\bb)
     \end{aligned}
   \right\} \qquad \text{weakly in }L^2([0,T];\mathbf{H}_1).
 \end{equation*}
 Next, due to the assumptions on $\bff$, we have
$   A_1^{-1/2}D^{1/2}_N\bff\to \bff$ {strongly in }$L^2([0,T];\mathbf{H}_0)$
 and, since for all $N\in \N$ we have
 $\bw_N(0)=G_1\bu(0)\in\mathbf{H}_2$  and $\bb_N(0)=G_2\bb(0)\in\mathbf{H}_2$, we get
\begin{equation*}
  \begin{aligned}
  &  \frac{1}{2} ( \| A_1^{1/2} D_{N_1}^{1/2} (\wit_N)(0)\|^2
  + \| A_2^{1/2} D_{N_2}^{1/2} (\bb_N)(0) \|^2) +\int_0^t
  \big(A_1^{-1/2} D_{N_1}^{1/2}( {\bf f}), A_1^{1/2} D_{N_1}^{1/2}(
  \wit_N)\big)\,ds
  \\
  &\overset{N\to+\infty}{\longrightarrow}    \frac{1}{2} (\|A_1 \wit(0)\|^2
  + \| A_2 \bb(0) \|^2) +\int_0^t({\bf f}, A_1\wit)\,ds.
  \end{aligned}
\end{equation*}
Next, we use the elementary inequalities for $\liminf$ and $\limsup$
to infer that
\begin{equation*}
 \begin{aligned}
   & \limsup_{N\to+\infty} \frac{1}{2}\big(\| A_1^{1/2} D_{N_1}^{1/2}
   (\wit_N)(t)\|^2+\| A_2^{1/2} D_{N_2}^{1/2} (\bb_N)(t)\|^2\big)
   \\
   &+ \liminf_{N\to+\infty}\bigg(\nu \int_0^t\| \g A_1^{1/2}
   D_{N_1}^{1/2}(\wit_N) (s)\|^2\,ds+\mu \int_0^t\| \g A^{1/2}_2
   D_{N_2}^{1/2}( \bb_N) (s)\|^2\,ds\bigg)
   \\
   &\leq\frac{1}{2}\big( \| A_1\wit(0) \|^2 + \| A_2\bb(0) \|^2 \big)
   +\int_0^t({\bf f}(s), A_1\wit(s))\,ds.
 \end{aligned}
\end{equation*}
By lower semicontinuity of the norm
and identification of the weak limit, we get the thesis.
\end{proof}
\section{Results for the second model}
\label{sec:existence2}
In this section, we consider the following LES model for MHD, which is
based on filtering only the velocity equation (and on the use of
deconvolution operators):
\begin{equation}
  \label{eq:adm1bis}
  \begin{aligned}
    & \partial_t \bw +\nabla\cdot G_1 \big(D_{N_1}(\bw) \otimes
    D_{N_1}(\bw)\big) - \nabla\cdot G_1\big(\bB \otimes \bB\big) +\nabla
    q-\nu\Delta\bw=G_1\bff,
    \\
    & \partial_t \bB +\nabla\cdot \big(\bB \otimes D_{N_1}(\bw)\big) -
    \nabla\cdot \big(D_{N_1}(\bw) \otimes \bB\big)- \mu\Delta\bB=0,
    \\
    &\nabla\cdot\bw = \nabla\cdot\bB =0,
    \\
    &\wit (0, \x)= G_1{\vit_0} (\x), \qquad \bB(0,\x)=\bB_0(\x),
 \end{aligned}
\end{equation}
and we will work with periodic boundary conditions.  A similar model
in the case without deconvolution has been also studied
in~\cite{CS2010}.  \par
Here we take $\alpha_2=0$, so that $\bb=\bB$ and $A_2=G_2=I$, and
$N_2=0$, so that $D_{N_2}\bB={I}\,\bB=\bB$. We set for simplicity
\begin{equation*}
\alpha=\alpha_1>0,\quad G=G_1,\quad A=A_1, \quad N=N_1.
\end{equation*}
The first aim of this section is to show the changes needed (w.r.t
Thm.~\ref{thm:existence}) to prove the existence of a unique solution
to the system~(\ref{eq:adm1bis}) for a given $N\in\N$, when we assume that
the data are such that
\begin{equation}
  \label{eq:reg-initial-data-bis}
  \vit_0 \in \mathbf{H}_{0}, \quad \bB_0 \in \mathbf{H}_{0}, \quad\text{and}\quad
  {\bf f}\in  L^2([0,T]\times\tore),
\end{equation}
which naturally yields
 $ G_1{\vit_0} \in \mathbf{H}_{2},  \; G_1{\bf
    f}\in L^2([0,T]; \mathbf{H}_{2})$.

We start by defining the notion of what we call a ``regular weak''
solution to this system.
\begin{definition}[``Regular weak'' solution]
  \label{RegSolbis}
  We say that the triple $(\wit, \bB, q)$ is a ``regular weak''
  solution to system~(\ref{eq:adm1bis}) if and only if the three
  following items are satisfied:

  \textcolor{red}{1) \underline{\sc Regularity} }
 \begin{eqnarray}
   && \label{Reg11}
   \wit \in   L^2([0,T];\mathbf{H}_{2})\cap C([0,T];\mathbf{H}_{1}),
   \quad \bB \in L^2([0,T];\mathbf{H}_{1}) \cap
   C([0,T];\mathbf{H}_{0}),
   \\
   && \label{Reg12}
   \p_t \wit \in L^2([0,T];\mathbf{H}_{0}), \quad \p_t \bB \in L^2([0,T];\mathbf{H}_{-1}),
   \\
   && \label{Reg13}
   q\in L^2([0,T]; H^1(\tore)),
 \end{eqnarray}

 \textcolor{red}{2) \underline {\sc Initial data} }
 \begin{equation}
   \label{Initial}
   \displaystyle \lim_{t \rightarrow 0}\|\wit(t, \cdot) -  G_1{\vit_0}  \|_{ \mathbf{H}_{1}} = 0, \qquad
   \lim_{t \rightarrow 0}\|\bB(t, \cdot) -  {\bB_0}  \|_{ \mathbf{H}_{0}} = 0,
 \end{equation}

 \textcolor{red}{3) \underline{\sc Weak Formulation}}: For all $\vittest, \bh  \in L^2([0,T];H^1(\tore)^3)$,
\begin{eqnarray}
   &&
   \begin{aligned}
     & \displaystyle\int_0^T\int_{\tore} \p_t \wit \cdot \vittest-
     \int_0^T\int_{\tore} G_1({D_{N_1}(\wit)\otimes
       D_{N_1}(\wit)}):\g\vittest
     \\
     & \qquad + \int_0^T\int_{\tore} G_1(\bB\otimes \bB):\g\vittest
     +\int_0^T\int_{\tore} \g q\cdot\vittest
     \\
     & \qquad +\nu
     \int_0^T\int_{\tore}\g\wit:\g\vittest=\int_0^T\int_{\tore}
     (G_1{\bf f})\cdot\vittest,
   \end{aligned}
   \\
   &&
   \begin{aligned}
     & \displaystyle\int_0^T\int_{\tore}\p_t\bB\cdot\bh-
     \int_0^T\int_{\tore} (\bB\otimes D_{N_1} (\bw)):\g\bh
     \\
     & \qquad +\int_0^T\int_{\tore} ({D_{N_1}(\bw)\otimes \bB}):\g\bh
     + \mu \int_0^T\int_{\tore}\g\bB:\g\bh=0.
   \end{aligned}
\end{eqnarray}
\end{definition}
\begin{remark}
  Due to the certain symmetry in the equations, it turns out that $\bB$
  has the same regularity of $D_{N} \bw$ (not that of $\bw$).
\end{remark}
All terms in the weak formulation are well-defined. Indeed, the only
term to be checked (which is different from the previous section) is
the bi-linear one involving $\bB\in L^4([0,T];L^3(\tore))^3$ and
$D_{N}(\bw)\in L^\infty([0,T];L^6(\tore))^3$. To this end, we observe
that
\begin{align*}
  & \int_0^T \int_{\tore} (\bB\otimes D_{N}(\bw)):\nabla\bh\leq C
  \int_0^T \|\bB(t)\|_{L^3} \|D_{N}(\bw)(t)\|_{L^6}
  \|\nabla\bh(t)\|_{L^2}
  \\
  & \qquad \leq C_T
  \|\bB\|_{L^4([0,T];L^3)}\|D_{N}(\bw)\|_{L^\infty([0,T]; L^6)}
  \|\nabla\bh\|_{L^2([0,T]; L^2)}.
\end{align*}

\medskip

We have now the following theorem showing that
system~\eqref{eq:adm1bis} is well-posed.
\begin{theorem}
  \label{thm:existence-Bis}
  Assume that~\eqref{eq:reg-initial-data-bis} holds, $\alpha>0$ and
  $N\in\N$ are given. Then, Problem~\eqref{eq:adm1bis} has a unique
  regular weak solution satisfying the energy inequality
  \begin{equation*}
    \frac{d}{dt}
    \big(\|A^{\frac{1}{2}}D_{N}^{\frac{1}{2}}(\wit)\|^2+\|\bB\|^2\big)
    +\nu\| \g A^{\frac{1}{2}}D_{N}^{\frac{1}{2}}(\wit) \|^2
    +\mu\|\g\bB \|^2
   \leq        C ( \| \vit_0 \|, \, \| \bB_0\|, \, \nu^{-1} \| {\bf f}
      \|_{L^2 ([0,T]; \mathbf{H}_{-1}) }).
 \end{equation*}
\end{theorem}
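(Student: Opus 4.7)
The strategy is to adapt the Galerkin scheme of the proof of Theorem \ref{thm:existence} to the asymmetric setting $\alpha_2 = 0$, $N_2 = 0$. I would work in the spaces $\mathbf{V}_m$ of divergence-free trigonometric polynomials, approximate $\bff$ by $\bff_{1/m}$, and obtain local-in-time $C^1$ solutions $(\bw_m,\bB_m)$ to the finite-dimensional ODE system by the Cauchy--Lipschitz theorem. The key \textit{a priori} estimate is then derived by testing the velocity equation against $AD_N(\bw_m)\in\mathbf{V}_m$ and, crucially, the magnetic equation against $\bB_m$ itself (since $\alpha_2 = 0$, $D_{N_2} = \mathrm{I}$, so the natural test field is just $\bB_m$). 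As in Theorem \ref{thm:existence}, the cubic self-interactions vanish by the divergence-free antisymmetry. Most importantly, the cross-coupling also cancels: unpacking the Lorentz contribution via $GA = \mathrm{I}$ gives $-\int(\bB_m\cdot\g)\bB_m\cdot D_N\bw_m$, while the corresponding term in the magnetic equation, after an integration by parts using $\g\cdot\bB_m = 0$, produces $+\int(\bB_m\cdot\g)\bB_m\cdot D_N\bw_m$. Adding the two balances yields
\begin{equation*}
\tfrac{1}{2}\tfrac{d}{dt}\bigl(\|A^{1/2}D_N^{1/2}\bw_m\|^2 + \|\bB_m\|^2\bigr) + \nu\|\g A^{1/2}D_N^{1/2}\bw_m\|^2 + \mu\|\g\bB_m\|^2 = (\bff_{1/m}, D_N\bw_m),
\end{equation*}
and since the symbol of $A^{-1/2}D_N^{1/2}$ is $\sqrt{\rho_{N,{\bf k}}}\le 1$ while $\|\g D_N\bw_m\|\le\|\g A^{1/2}D_N^{1/2}\bw_m\|$, the right-hand side is absorbed by the dissipation at the cost of $\nu^{-1}\|\bff\|^2_{L^2([0,T];\mathbf{H}_{-1})}$, producing the stated energy inequality uniformly in $m$ and $N$.

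\textbf{Compactness and passage to the limit.} The previous estimate provides the analogue of (\ref{eq:Estimates}) for the present model: $\bw_m$ still enjoys $L^\infty([0,T];\mathbf{H}_1)\cap L^2([0,T];\mathbf{H}_2)$ together with $\partial_t\bw_m\in L^2([0,T];\mathbf{H}_0)$, while the magnetic field now only satisfies $\bB_m\in L^\infty([0,T];\mathbf{H}_0)\cap L^2([0,T];\mathbf{H}_1)$, which embeds by interpolation into $L^4([0,T];L^3(\tore)^3)$. Testing the magnetic equation against arbitrary $\bh\in L^2([0,T];\mathbf{H}_1)$ and using the bound
\begin{equation*}
\Bigl|\int_0^T\!\!\int_{\tore} (\bB_m\otimes D_N\bw_m):\g\bh\Bigr|\le C\|\bB_m\|_{L^4L^3}\|D_N\bw_m\|_{L^\infty L^6}\|\bh\|_{L^2\mathbf{H}_1}
\end{equation*}
(already indicated in the excerpt preceding the statement) gives $\partial_t\bB_m\in L^2([0,T];\mathbf{H}_{-1})$ uniformly in $m,N$. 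Aubin--Lions compactness then supplies a subsequence converging weakly in the high-regularity spaces and strongly in $L^2([0,T]\times\tore)^3$ for $\bB_m$ and in $L^p([0,T];\mathbf{H}_1)$ for $\bw_m$, enough to pass to the limit in every bilinear product in the weak formulation. The pressure is recovered from $\Delta q = \nabla\cdot[G\bff - \nabla\cdot G(D_N\bw\otimes D_N\bw) + \nabla\cdot G(\bB\otimes\bB)]$ by De Rham's theorem exactly as in Theorem \ref{thm:existence}, and the energy inequality survives the limit by lower semicontinuity of the norms.

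\textbf{Uniqueness; main obstacle.} For uniqueness I would test the difference equations for $\mathcal{W}=\bw_1-\bw_2$, $\mathcal{B}=\bB_1-\bB_2$ against $AD_N\mathcal{W}$ and $\mathcal{B}$, using standard splittings such as $D_N\bw_1\otimes D_N\bw_1 - D_N\bw_2\otimes D_N\bw_2 = D_N\mathcal{W}\otimes D_N\bw_1 + D_N\bw_2\otimes D_N\mathcal{W}$ and its analogues for the mixed terms. The coupling cancellation survives, leaving remainder terms involving $\mathcal{B}\otimes\bB_1$ and $D_N\bw_1\otimes\mathcal{B}$. The principal obstacle---and the only substantial departure from the uniqueness proof in Theorem \ref{thm:existence}---is that here $\bB_1$ is \emph{not} controlled in $L^\infty([0,T];\mathbf{H}_1)$, so the Gronwall coefficient cannot be taken as a constant $\sup_t\|\g\bB_1\|^4$ as in~(\ref{eq:56LKP0}). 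The remedy is to distribute the factors via Ladyzhenskaya and H\"older so that each offending quantity is majorized by $\bigl(\|\g\bw_1\|^4 + \|\g\bB_1\|^2\bigr)\bigl(\|A^{1/2}D_N^{1/2}\mathcal{W}\|^2+\|\mathcal{B}\|^2\bigr)$ plus a term absorbed by the dissipation $\nu\|\g A^{1/2}D_N^{1/2}\mathcal{W}\|^2+\mu\|\g\mathcal{B}\|^2$. Since $\|\g\bB_1\|^2\in L^1([0,T])$ from the energy inequality, the integrable form of Gronwall's lemma still applies and delivers $\mathcal{W}=\mathcal{B}=\mathbf{0}$.
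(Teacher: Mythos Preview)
Your existence argument is essentially the paper's: the same Galerkin scheme, the same test functions $AD_N(\bw_m)$ and $\bB_m$, the same cross-cancellation, and the same compactness package. One small slip: the bound you quote for $\partial_t\bB_m$ via $\|D_N\bw_m\|_{L^\infty_t L^6}$ uses $D_N\bw_m\in L^\infty([0,T];\mathbf H_1)$, which from the energy estimate is only $O(\sqrt{N+1}/\alpha)$, so the estimate is uniform in $m$ but not in $N$. This is harmless here since $N$ is fixed.

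The genuine gap is in your uniqueness argument. You correctly flag that $\bB_1\notin L^\infty([0,T];\mathbf H_1)$ is the obstacle, but the remedy you propose --- a time-dependent Gronwall coefficient $\|\g\bB_1\|^2\in L^1([0,T])$ obtained ``via Ladyzhenskaya and H\"older'' --- does not close in three dimensions. For the typical mixed term $((\mathcal B\cdot\nabla)\bB_1,D_N\mathcal W)$, any H\"older split that keeps $\nabla\bB_1$ in $L^2_x$ forces the remaining factors to be absorbed at exponents that, after Young, produce $\|\g\bB_1\|^4$ in front of the energy, and $\|\g\bB_1\|^4$ is \emph{not} integrable on $[0,T]$ (you only have $\|\g\bB_1\|^2\in L^1$). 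The paper avoids this by exploiting the extra derivative hidden in the dissipation $\|\nabla A^{1/2}D_N^{1/2}\mathcal W\|^2$: since $A^{1/2}$ has symbol $(1+\alpha^2|{\bf k}|^2)^{1/2}\ge \alpha|{\bf k}|$, this quantity controls $\alpha\,\|\Delta D_N^{1/2}\mathcal W\|$, so $D_N\mathcal W$ can be placed in $W^{1,4}$ or $L^\infty$ via interpolation with an $H^2$ norm. This lets one keep $\bB_1$ in $L^2_x$ only, and the resulting Gronwall coefficient is the \emph{constant} $\sup_t\|\bB_1\|^4$ (times powers of $(N+1)$ and $\alpha^{-1}$), exactly as in the paper's estimates of $I_2,I_3$. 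In short: the missing ingredient is not a different distribution of H\"older exponents but the use of the $H^2$-level control on $D_N\mathcal W$ coming from the $A^{1/2}$ in the dissipation.
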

\begin{proof}
  We use the same notation and tools from the previous section and the
  main result can be derived from the energy estimate.  We just give
  some details on the estimates which are different from the previous
  case, since the reader can readily fill the missing details. We use
  $D_{N}(\bw_\ind)$ in the first equation and $\bB_\ind$ in the second
  one as test functions to obtain
\begin{equation*}
  \begin{aligned}
    & \frac{1}{2}\frac{d}{dt}
    \Big(\|A^{1/2}D_{N}^{1/2}(\bw_\ind)\|^2+\|\bB_\ind\|^2\Big)
    +\nu\|\nabla A^{1/2}
    D_{N}^{1/2}(\bw_\ind)\|^2+\mu\|\nabla\bB_\ind\|^2
    \\
    & \qquad = \big(A^{1/2} D_{N}^{1/2}(
    G\mathbf{f}_{1/m}),A^{1/2}D_{N}^{1/2} (\bw_\ind)\big) .
\end{aligned}
  \end{equation*}
  Then, by using the same tools employed in the previous section, we
  have the
  following estimates.
\begin{equation}
  \label{eq:Estimates-bis}
    \begin{array} {  @{}>{\vline \hfill  } c @{\,
          \, \,  }>{\vline  \hfill} c @{\, \, \,  }>{\vline \hfill } c
        @{\, \, \,  }>{\vline \, \, \,  } c   @{\hfill \vline  }}
      \hline
      \,\,  \hbox{Label} & \hbox{Variable} &\hbox{   Bound          }
      \hskip 2cm ~  \hfill &      \hfill\hbox{Order} \quad ~
      \\
      \hline
      \,a)\ \  \phantom{} & \, A^{\frac{1}{2}}D_{N}^{\frac{1}{2}}(\wit_\ind),\,\bB_\ind&
      \,L^\infty([0,T];\mathbf{H}_{0})\cap L^2([0,T];\mathbf{H}_{1})   &
      \hspace{.2cm} O(1)
      \\
      \hline b)\ \  \phantom{}& D_{N}^{1/2}(\wit_\ind) \,  &
      L^\infty([0,T];\mathbf{H}_{0}) \cap L^2([0,T];  \mathbf{H}_{1})   & \hspace{.2cm} O(1)
      \\
      \hline c )\ \  \phantom{}& D_{N}^{1/2}(\wit_\ind)\, \,  &
      L^\infty([0,T];\mathbf{H}_{1})\cap L^2([0,T];  \mathbf{H}_{2})   & \hspace{.2cm}
      O(\alpha^{-1})
      \\
      \hline d)\ \  \phantom{} & \wit_\ind \,\, \, \, \, \,
      &
      L^\infty([0,T];\mathbf{H}_{0})\cap L^2([0,T];\mathbf{H}_{1})   &\hspace{.2cm} O(1)
      \\
      \hline
      e )\ \  \phantom{}& \wit_\ind \, \, \, \, \,  &
      L^\infty([0,T];\mathbf{H}_{1})\cap L^2([0,T];\mathbf{H}_{2})   &
      \hspace{.2cm} O(\alpha^{-1})
      \\
      \hline
      f)\ \  \phantom{} & D_{N} (\wit_\ind) \,
      \, & L^\infty([0,T];\mathbf{H}_{0})\cap
      L^2([0,T];\mathbf{H}_{1})   & \hspace{.2cm} O(1)
      \\
      \hline
      g)\ \  \phantom{} & D_{N} (\wit_\ind) \,
      \, \, & L^\infty([0,T];\mathbf{H}_{1})\cap
      L^2([0,T];\mathbf{H}_{2})   & O(\frac{ \sqrt{N+1}}{\alpha}) \raisebox{3ex} \,~
      \\[.5ex]
      \hline
      h)\ \  \phantom{} &\p_t \wit_\ind \, \, &
      L^2([0,T];\mathbf{H}_{0})   \hspace{.2cm} ~  &  \hspace{.2cm} O(\alpha^{-1}).
      \\
      \hline
      i)\ \  \phantom{} &\p_t \bB_\ind \raisebox{3ex} \, \, &
      L^2([0,T];\mathbf{H}_{-1})   \hspace{.2cm} ~  &  \hspace{.2cm} O(\frac{{(N+1)}^{1/4}}{\alpha^{1/2}}).
      \\[.5ex]
      \hline
    \end{array}
  \end{equation}

  \medskip

  The estimates~(\ref{eq:Estimates-bis}-a)--(\ref{eq:Estimates-bis}-h)
  are the exact analogous of the corresponding ones
  from~\eqref{eq:Estimates}. What it remains to be proved is
  just~(\ref{eq:Estimates-bis}-i). Let be given $\bh \in
  L^2([0,T];\bH_1)$; then
\begin{equation*}
  (\partial_t \bB_\ind, \bh) = -\mu(\nabla\bB_\ind, \nabla \bh) +
  (\bB_\ind\otimes D_{N}(\bw_\ind), \nabla \bh) - (D_{N}(\bw_\ind)\otimes
  \bB_\ind, \nabla \bh)\, .
\end{equation*}
Hence we obtain, by the usual Sobolev and convex interpolation
inequalities,
\begin{equation*}
  \begin{aligned}
    \left| (\partial_t \bB_\ind, \bh) \right|&
    \leq\mu\|\nabla\bB_\ind\|\,\|\nabla
    \bh\|+2\|\bB_\ind\|_{L^6}\|D_N\bw_\ind\|_{L^3}\|\nabla\bh\|
    \\
    & \leq\|\nabla\bB_\ind\|\big(\mu+
    C\|D_N\bw_\ind\|^{1/2} \|\g
    D_N\bw_\ind\|^{1/2}\big)\|\nabla\bh\|.
  \end{aligned}
\end{equation*}
Next, by employing estimates~(\ref{eq:Estimates-bis}-a)-d)-f)-g), we
get
\begin{equation*}
  \begin{aligned}
    &\left|\int_0^T (\partial_t \bB_\ind, \bh)\,dt \right|
    \\
    &\leq\|\bB_\ind\|_{L^2([0,T];\bH_{1})}\big(\mu+
    \|D_{N}(\bw_\ind)\|_{L^\infty([0,T];\bH_0)}^{1/2}\|\nabla
    D_{N}(\bw_\ind)\|_{L^\infty([0,T];\bH_0)}^{1/2}
    \big)\|\nabla\bh\|_{L^2([0,T];L^2)},
    \\
    & \leq C\big(\mu+\frac{(N+1)^{1/4}}{\alpha^{1/2}}
    \big)\|\nabla\bh\|_{L^2([0,T];L^2)}.
  \end{aligned}
\end{equation*}
These estimates are enough to pass to the limit as $\ind\to+\infty$
and to show that the limit $(\bw,\bB)$ is a weak solution which satisfies
\begin{eqnarray}
  \label{eq:VARFORMbis}
   &&
   \begin{aligned}
     & \displaystyle\int_0^T\int_{\tore} \p_t \wit\cdot\vittest-
     \int_0^T\int_{\tore} G({D_{N} (\wit) \otimes D_{N}
       (\wit)}):\g\vittest + \int_0^T\int_{\tore} G({\bB\otimes\bB}):\g\vittest
     \\
     & \qquad+ \nu \int_0^T\int_{\tore} \g \wit : \g \vittest=
     \int_0^T\int_{\tore} G({\bf f}) \cdot \vittest,
   \end{aligned}
\\
   &&
   \begin{aligned}
     & \displaystyle\int_0^T\int_{\tore} \p_t \bB \cdot \bh -
     \int_0^T\int_{\tore} ({ \bB \otimes D_{N} (\bw)}):\g\bh
     \\
     & \qquad + \int_0^T\int_{\tore} ({D_{N} (\bw) \otimes\bB}):\g\bh
     + \mu \int_0^T\int_{\tore} \g \bB : \g  \bh=0.
   \end{aligned}
 \end{eqnarray}
 The introduction of the pressure follows exactly as in the previous
 section, while the uniqueness needs some minor adjustments.  Let in
 fact $(\wit_1,\bB_1)$ and $(\wit_2,\bB_2)$ be two solutions
 corresponding to the same data $(\bu_0, \bB_0, \bff)$ and let us
 define as usual $\mathcal{W}:=\wit_1-\wit_2$ and $\mathcal{B}:=\bB_1-
 \bB_2$. We will use $AD_{N} (\mathcal{W})$ and $\mathcal{B}$ as test
 functions in the equations satisfied by $\mathcal{W}$ and $\mathcal{B}$,
 respectively. Observe that, by standard calculations, $AD_{N}
 (\mathcal{W})$ lives in $L^2 ([0,T]\times\tore)^3$, while $\mathcal{B}\in
 L^2([0,T];\bH_{1})$.  In order to justify the calculations ---~those
 for the velocity equation are analogous to the previous ones~--- first
 observe that, for any fixed order of deconvolution $N$,
\begin{equation*}
  \int_0^t\langle\p_t\mathcal{B},\mathcal{B}\rangle_{\bH_1,\bH_{-1}}=
  \frac{1}{2}\big(\|\mathcal{B}(t)\|^2-\|\mathcal{B}(0)\|^2\big),
\end{equation*}
since the duality is well-defined thanks to~(\ref{eq:Estimates-bis}-a)-i).
We formally write the distributional expression, keeping the time
derivative, and  we get the following equality (to be more precise, one
should write directly the integral formula, after integration over
$[0,t]$, but the reader can easily fill the details):
\begin{equation*}
  \begin{aligned}
    & \frac{1}{2} \frac{d}{dt} \big(\|A^{1/2}D_{N}^{1/2}(\mathcal{W})\|^2
    +\|\mathcal{B}\|^2 \big) + \nu\|\nabla
    A^{1/2}D_{N}^{1/2}(\mathcal{W})\|^2+
    \mu\|\nabla\mathcal{B}\|^2
    \\
    & \qquad = -\big( (D_{N}(\mathcal{W}) \cdot\nabla)D_{N}(\bw_1),\,
    D_{N}(\mathcal{W})\big) + \big((\mathcal{B}\cdot\nabla)\bB_1,\, D_{N}(\mathcal{W})\big)
    \\
    & \qquad \qquad -\big((D_{N}(\mathcal{W})\cdot\nabla)\bB_1,\,\mathcal{B}\big)
    +\big((\mathcal{B}\cdot \nabla) D_{N}(\bw_1),\, \mathcal{B}\big)
    \\
    & \qquad =: I_1 + I_2 + I_3 + I_4\, .
  \end{aligned}
\end{equation*}
Now, we need to estimate the four integrals in the right-hand
side. The estimates are obtained by using the standard interpolation
and Sobolev inequalities together with the properties of $D_N$. %
%
%
We have:
\begin{equation*}
\begin{aligned}
  |I_1| \leq \varepsilon\nu\|\nabla A^{1/2}
  D_{N}^{1/2}(\mathcal{W})\|^2+\frac{C_\varepsilon
    (N+1)^4 \sup_{t\geq0}\|\nabla\bw_1\|^4}{\nu^3} \|A^{1/2}
  D_{N}^{1/2}(\mathcal{W})\|^2\, ,
\end{aligned}
\end{equation*}

\medskip

\noindent

\begin{equation*}
  \begin{aligned}
    |I_2| & \leq \|\mathcal{B}\|_{L^4} \|\nabla D_{N}(\mathcal{W})\|_{L^4} \|\bB_1\|
    \\
    & \leq C\|\mathcal{B}\|^{1/4} \|\nabla\mathcal{B}\|^{3/4} \|\nabla
    D_{N}(\mathcal{W})\|^{1/4} \|\Delta D_{N}(\mathcal{W})\|^{3/4}\|\bB_1\|
    \\
    & \leq C \frac{(N+1)^{1/2}}{\alpha} \|\mathcal{B}\|^{1/4}
    \|\nabla\mathcal{B}\|^{3/4} \alpha^{1/4}\|\nabla D_{N}^{1/2}(\mathcal{W})\|^{1/4}
    \alpha^{3/4}\|\Delta D_{N}^{1/2}(\mathcal{W})\|^{3/4}\|\bB_1\|
    \\
    & \leq C \frac{(N+1)^{1/2}}{\alpha} \|\mathcal{B}\|^{1/4}
    \|\nabla\mathcal{B}\|^{3/4} \|A^{1/2} D_{N}^{1/2}(\mathcal{W})\|^{1/4} \|\nabla
    A^{1/2} D_{N}^{1/2}(\mathcal{W})\|^{3/4}\|\bB_1\|
    \\
    & \leq \varepsilon\mu\|\nabla\mathcal{B}\|^2 + \frac{C_\varepsilon
      (N+1)^{4/5}}{\mu^{3/5}\alpha^{8/5}} \|\mathcal{B}\|^{2/5} \|A^{1/2}
    D_{N}^{1/2}(\mathcal{W})\|^{2/5} \|\nabla A^{1/2}
    D_{N}^{1/2}(\mathcal{W})\|^{6/5}\|\bB_1\|^{8/5}
    \\
    & \leq \varepsilon\mu\|\nabla\mathcal{B}\|^2 + \varepsilon\nu\|\nabla
    A^{1/2} D_{N}^{1/2}(\mathcal{W})\|^2 + \frac{C_\varepsilon
      (N+1)^2}{\mu^{3/2}\nu^{3/2}\alpha^4} \|\mathcal{B}\|\, \|A^{1/2}
    D_{N}^{1/2}(\mathcal{W})\| \, \|\bB_1\|^4
    \\
    & \leq \varepsilon\mu\|\nabla\mathcal{B}\|^2 + \varepsilon\nu\|\nabla
    A^{1/2} D_{N}^{1/2}(\mathcal{W})\|^2 + \frac{C_\varepsilon
      (N+1)^2}{\mu^{3/2}\nu^{3/2}\alpha^4} \|\bB_1\|^4 \big( \|\mathcal{B}\|^2
    + \|A^{1/2} D_{N}^{1/2}(\mathcal{W})\|^2 \big) .
\end{aligned}
\end{equation*}

\bigskip

\begin{equation*}
\begin{aligned}
  |I_3| & 
  \leq \|D_{N}(\mathcal{W})\|_{L^\infty} \|\nabla\mathcal{B}\| \, \|\bB_1\|
  \\
  & \leq C \|\nabla D_{N}(\mathcal{W})\|^{1/2} \|\Delta D_{N}(\mathcal{W}) \|^{1/2}
  \|\nabla\mathcal{B}\| \, \|\bB_1\|
  \\
  & \leq C \frac{(N+1)^{1/2}}{\alpha} \|A^{1/2}
  D_{N}^{1/2}(\mathcal{W})\|^{1/2} \|\nabla A^{1/2} D_{N}^{1/2}(\mathcal{W})\|^{1/2}
  \|\nabla\mathcal{B}\| \, \|\bB_1\|
  \\
  & \leq \varepsilon\mu\|\nabla\mathcal{B}\|^2 +
  \frac{C_\varepsilon(N+1)}{\mu\alpha^2} \|A^{1/2} D_{N}^{1/2}(\mathcal{W})\|
  \, \|\nabla A^{1/2} D_{N}^{1/2}(\mathcal{W})\| \, \|\bB_1\|^2
  \\
  & \leq \varepsilon\mu\|\nabla\mathcal{B}\|^2 + \varepsilon\nu \|\nabla
  A^{1/2} D_{N}^{1/2}(\mathcal{W})\|^2 +
  \frac{C_\varepsilon(N+1)^2}{\mu^2\nu\alpha^4} \|\bB_1\|^4 \|A^{1/2}
  D_{N}^{1/2}(\mathcal{W})\|^2\, ,
\end{aligned}
\end{equation*}

\bigskip

\begin{equation*}
\begin{aligned}
  |I_4| & \leq \|\mathcal{B}\|_{L^4}^2 \|\nabla D_{N}(\mathcal{W})\| \leq C
  \|\mathcal{B}\|^{1/2}\|\nabla\mathcal{B}\|^{3/2}(N+1)\|\nabla\bw_1\|
  \\
  & \leq \varepsilon\mu\|\nabla\mathcal{B}\|^2+\frac{C_\varepsilon
    (N+1)^4}{\mu^3} \|\nabla\bw_1\|^4 \|\mathcal{B}\|^2\, .
\end{aligned}
\end{equation*}
We then set $\varepsilon=1/6$ and, by collecting all the estimates, we
finally obtain
\begin{equation*}
  \frac{d}{dt} \big( \|A^{1/2} D_{N}^{1/2}(\mathcal{W})\|^2+\|\mathcal{B}\|^2 \big)
  + \nu\|\nabla A^{1/2} D_{N}^{1/2}(\mathcal{W})\|^2 +
  \mu\|\nabla\mathcal{B}\|^2 \leq CM \big( \|A^{1/2} D_{N}^{1/2}(\mathcal{W})\|^2+\|\mathcal{B}\|^2
  \big) ,
\end{equation*}
where
\begin{equation*}
  M \doteq (N+1)^4 \max\sup_{t\geq0}\Big\{
  \frac{\|\nabla\bw_1(t)\|^4}{\nu^3},
  \frac{\|\nabla\bw_1(t)\|^4}{\mu^3},
  \frac{\|\bB_1(t)\|^4}{\mu^{3/2}\nu^{3/2}\alpha^4},\frac{\|\bB_1(t)\|^4}{\mu^2\nu
    \alpha^4}
  \Big\} .
\end{equation*}
An application of the Gronwall's lemma proves (for any fixed $N$) that
$\|A^{1/2} D_{N}^{1/2}(\mathcal{W})\|^2+\|\mathcal{B}\|^2=0$; hence, by using the
properties of $A$ and $D_N$ exploited before, we
finally get
 $ \mathcal{W}=\mathcal{B}\equiv{\bf 0}$.
\end{proof}

\medskip

We can now pass to the problem of the convergence as $N\to+\infty$,
proving the counterpart of Theorem~\ref{thm:Principal}.
\begin{theorem}
  \label{thm:Principal-Bis}
  From the sequence $\{(\wit_N, \bB_N,q_N)\}_{N \in \N}$, one can
  extract a sub-sequence (still denoted $\{(\wit_N,\bB_N,q_N)\}_{N \in
    \N}$) such that
  \begin{alignat*}{2}
    & \wit_N \longrightarrow \wit
    &\quad &
    \left \{
      \begin{array} {l}
        \hbox{weakly in }  L^2([0,T];  \mathbf{H}_{2}),
        \\
        \hbox{weakly$\ast$ in }  L^\infty([0,T];  \mathbf{H}_{1}),
        \\
        \hbox{strongly in } L^p([0,T]; {\bf H}_1) \quad \forall \, p\in[1,  \infty[,
      \end{array}
    \right.
    \\
    &    \bB_N \longrightarrow \bB
    &\quad&
    \left \{
      \begin{array} {l}
        \hbox{weakly in }  L^2([0,T];  \mathbf{H}_{1}),
        \\
        \hbox{weakly$\ast$ in }  L^\infty([0,T];  \mathbf{H}_{0}),
        \\
        \hbox{strongly in } L^p([0,T]\times\tore)^3 \quad \forall \, p\in[1,  10/3[,
      \end{array}
    \right. \\
    &q_N\longrightarrow q&\quad &\text{weakly in }L^2([0,T];H^1(\tore))\cap
    L^{5/3}([0,T]; W^{2,5/3}(\tore)),
  \end{alignat*}
  and such that the system
  \begin{equation}
    \label{eq:adm2-bis}
    \begin{aligned}
      \partial_t\bw+\nabla\cdot G ({A\bw\otimes
        A\bw})-\nabla\cdot G{(\bB\otimes
        \bB)}+\nabla  q -\nu\Delta\bw=\bff,
      \\
      \nabla\cdot\bw=   \nabla\cdot\bB=0,
      \\
      \partial_t\bB+\nabla\cdot {(\bB\otimes A\bw)}-\nabla\cdot
      {(A\bw\otimes\bB)}-\mu\Delta\bb=0,
      \\
      \bw (0, \x) = G{\vit_0} (\x), \qquad \bB (0, \x) = {\bB}_0 (\x)
    \end{aligned}
  \end{equation}
  holds in distributional sense and the following energy inequality is
  satisfied:
  \begin{equation*}
   \frac{1}{2}\frac{d}{dt}(\|A\wit\|^2+\|\bb\|^2)+\nu\|\g A\wit\|^2
   +\mu\|\g\bb\|^2\le({\bf f},A\wit).
 \end{equation*}
\end{theorem}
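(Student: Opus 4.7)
The plan is to adapt the proof strategy of Theorem~\ref{thm:Principal} to the model \eqref{eq:adm1bis}, where the only substantial difference comes from the fact that $\bB$ plays the role of $A_2\bb$ and thus only enjoys the lower regularity $L^\infty([0,T];\bH_0) \cap L^2([0,T];\bH_1)$ uniformly in $N$.

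First I would establish uniform (in $N$) a priori estimates. The energy estimate of Theorem~\ref{thm:existence-Bis}, being independent of $N$, already gives $\bw_N$ bounded in $L^\infty([0,T];\bH_1) \cap L^2([0,T];\bH_2)$, $D_N(\bw_N)$ bounded in $L^\infty([0,T];\bH_0) \cap L^2([0,T];\bH_1)$, and $\bB_N$ bounded in $L^\infty([0,T];\bH_0) \cap L^2([0,T];\bH_1)$. Copying the argument used to obtain \eqref{eq:Estimates22}, $\p_t\bw_N$ is bounded in $L^2([0,T] \times \tore)^3$ uniformly in $N$. For the pressure I would take the divergence of the first equation to get $\Delta q_N = \nabla\cdot G\bff + \nabla\cdot\mathcal{A}_N$ with $\mathcal{A}_N := -G[\nabla\cdot(D_N(\bw_N)\otimes D_N(\bw_N)) - \nabla\cdot(\bB_N\otimes\bB_N)]$; interpolation yields $\bB_N, D_N(\bw_N) \in L^{10/3}([0,T]\times\tore)$, so $\mathcal{A}_N \in L^{5/3}([0,T];W^{1,5/3}(\tore))$ and the claimed regularity of $q_N$ follows. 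For $\p_t \bB_N$ I would use the equation itself and control the quadratic terms in $L^{5/3}([0,T]\times\tore)^9$ by H\"older, giving $\p_t\bB_N \in L^{5/3}([0,T];W^{-1,5/3}(\tore))$ uniformly in $N$.

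Second, I would extract a diagonal subsequence. The bounds on $\bw_N$, $\p_t\bw_N$, $\bB_N$, $\p_t\bB_N$, and $D_N(\bw_N)$ together with the Aubin--Lions lemma furnish the weak and weak-$\ast$ convergences listed in the statement, the strong convergence of $\bw_N$ in $L^p([0,T];\bH_1)$ for all $p<\infty$, and the strong convergence of $\bB_N$ and $D_N(\bw_N)$ in $L^p([0,T]\times\tore)^3$ for every $p < 10/3$. The pressure converges weakly by the bounds from the first step. The identification $\mathbf{z} := \lim D_N(\bw_N) = A\bw$ follows by testing against smooth divergence-free $\bphi$, writing $(D_N(\bw_N),\bphi) = (\bw_N, D_N(\bphi))$, and using \eqref{lem:converges} on $\bphi$ combined with the weak convergence of $\bw_N$ to $\bw$.

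Third, these convergences let me pass to the limit in the distributional formulation \eqref{eq:VARFORMbis}: the products $D_N(\bw_N)\otimes D_N(\bw_N) \to A\bw \otimes A\bw$, $\bB_N \otimes \bB_N \to \bB\otimes\bB$, and the mixed term $\bB_N \otimes D_N(\bw_N) \to \bB\otimes A\bw$ all converge strongly in $L^{p}([0,T]\times\tore)^9$ for every $p<5/3$, which suffices to obtain \eqref{eq:adm2-bis}. The main obstacle, compared to Theorem~\ref{thm:Principal}, is that $\bB_N$ has only the lower regularity of $D_N(\bw_N)$ rather than that of $\bw_N$, so one cannot rely on compactness in $L^p\bH_1$ for $\bB_N$; the $L^{5/3}(W^{-1,5/3})$ bound on $\p_t\bB_N$ is the key new ingredient that compensates for this loss. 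The energy inequality is then obtained by the same lower-semicontinuity argument as after Theorem~\ref{thm:Principal}: starting from the approximate energy equality (see Remark~\ref{Rem:En-Approx} with $\bB_N$ in place of $A_2^{1/2}D_{N_2}^{1/2}(\bb_N)$), the left-hand side passes to the limit via the elementary $\liminf/\limsup$ inequalities together with lower semicontinuity of the $L^2$ norms, while the right-hand side and the initial data converge thanks to the uniform bounds on the symbols $\rho_{N,\mathbf{k}}$, yielding the desired inequality for $(A\bw,\bB)$.
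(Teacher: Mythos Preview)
Your proposal is essentially correct and follows the same route as the paper: uniform-in-$N$ a priori estimates, Aubin--Lions compactness, identification of $\lim D_N(\bw_N)=A\bw$, passage to the limit in the nonlinear terms, and lower semicontinuity for the energy inequality. Two small points are worth noting.

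First, you bound $\p_t\bB_N$ in $L^{5/3}([0,T];W^{-1,5/3}(\tore))$ via the $L^{10/3}$ control of $\bB_N$ and $D_N(\bw_N)$, whereas the paper bounds it in $L^{4/3}([0,T];\bH_{-1})$ by estimating $(\p_t\bB_N,\bh)$ with the $L^4$--$L^4$--$L^2$ splitting and the interpolation $\|\cdot\|_{L^4}\le C\|\cdot\|^{1/4}\|\nabla\cdot\|^{3/4}$. Both choices are adequate for Aubin--Lions and yield the same strong convergence of $\bB_N$; your version is slightly quicker.

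Second, there is a genuine (if easy to fix) omission: to invoke Aubin--Lions for the \emph{strong} convergence of $D_N(\bw_N)$ you also need a uniform-in-$N$ bound on $\p_t D_N(\bw_N)$, which you do not list among your estimates in the second paragraph. The bounds on $\bw_N$ and $\p_t\bw_N$ alone do not give this, since $D_N$ varies with $N$ and has operator norm $N+1$. The paper records this explicitly as $\p_t D_N(\bw_N)\in L^{4/3}([0,T];\bH_{-1})$ uniformly in $N$, obtained exactly as in the proof of \eqref{eq:Estimates22}-f) by testing the $\bw$-equation against $D_N(\vittest)$ and using that $G\circ D_N$ has bounded symbol. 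You should add this estimate before claiming strong convergence of $D_N(\bw_N)$.
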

\begin{proof}
  This result is based on the following estimates and from compactness results.
  \begin{equation}
  \label{eq:Estimates-2bis}
    \begin{array} {  @{}>{\vline \hfill  } c @{\,
          \, \,  }>{\vline  \hfill} c @{\, \, \,  }>{\vline \hfill } c
        @{\, \, \,  }>{\vline \, \, \,  } c   @{\hfill \vline  }}
      \hline
       \hbox{Label} & \hbox{Variable} &\hbox{   Bound          }
      \hskip 2cm ~  \hfill &      \hfill\hbox{Order} \quad ~
      \\
      \hline
      \hline a)& \wit_N 
      &
      L^\infty([0,T];\mathbf{H}_{0})\cap L^2([0,T];\mathbf{H}_{1})   &\hspace{.1cm} O(1)
      \\
      \hline
      b)& \wit_N 
      & L^\infty([0,T];\mathbf{H}_{1})\cap L^2([0,T];\mathbf{H}_{2})   &
      \hspace{.1cm} O(\alpha^{-1})
      \\
      \hline
      c) & D_{N} (\wit_N) ,
      \, \bB_N & L^\infty([0,T];\mathbf{H}_{0})\cap
      L^2([0,T];\mathbf{H}_{1})   & \hspace{.1cm} O(1)
      \\
      \hline
      d) & q_N &\,
   L^{2}([0,T];  H^1(\tore)) \cap L^{5/3}([0,T];  W^{2, 5/3}(\tore)) \,
   &\, O(\alpha^{-1} )
   \\
   \hline
      e) &\p_t \wit_N  &
      L^2([0,T];\mathbf{H}_{0})    &  \hspace{.1cm} O(\alpha^{-1})
      \\
      \hline
      f)&\,\p_t D_N(\bw_N), \p_t \bB_N  &
      L^{4/3}([0,T];\mathbf{H}_{-1})    &  \hspace{.1cm} O(1)
      \\
      \hline
    \end{array}
  \end{equation}
  The only new bound here is represented by the one for $\p_t \bB_N $
  from~(\ref{eq:Estimates-2bis}-f). In fact, by the usual
  interpolation inequalities, we get
\begin{equation*}
  \begin{aligned}
    \left| (\partial_t \bB_N, \bh) \right|&
    \leq\mu\|\nabla\bB_N\|\,\|\nabla
    \bh\|+2\|\bB_N\|_{L^4}\|D_N(\bw_N)\|_{L^4}\|\nabla\bh\|
    \\
    & \leq\big(\mu\|\nabla\bB_N\|+
    C\|\bB_N\|^{1/4}\|\g\bB_N\|^{3/4}\|D_N(\bw_N)\|^{1/4} \|\g
    D_N(\bw_N)\|^{3/4}\big)\|\nabla\bh\|.
  \end{aligned}
\end{equation*}
Next, by employing estimate~(\ref{eq:Estimates-2bis}-c), we get
\begin{equation*}
  \left| (\partial_t \bB_N, \bh) \right| \leq\big(\mu\|\nabla\bB_N\|+
  C\|\g\bB_N\|^{3/4}\|\g    D_N(\bw_N)\|^{3/4}\big)\|\nabla\bh\|,
\end{equation*}
and since both $\g\bB_N,\, \g D_N(\bw)\in L^2([0,T];L^2(\tore)^9)$, we can show that
\begin{equation*}
  \begin{aligned}
    \left|\int_0^T (\partial_t \bB_N, \bh)\,dt \right|
    & \leq
    \mu\|\nabla\bB_N\|_{L^2([0,T];L^2)}\|\nabla\bh\|_{L^2([0,T];L^2)}
    \\
    & \qquad + C \|\nabla\bB_N\|_{L^2([0,T];L^2)}^{3/4} \|\nabla
    D_{N}(\bw)_N \|_{L^2([0,T];L^2)}^{3/4}
    \|\nabla\bh\|_{L^4([0,T];L^2)}\, ,
  \end{aligned}
\end{equation*}
thus proving that $\p_t\bB_N\in L^{4/3}([0,T];\bH_{-1})$,
independently of $N$.

The limit $N\to+\infty$ can be studied as in the previous section. In
addition to the same estimates proved before, from the bound on the
time derivative of $\bB$ we obtain that %
 $ \bB_N\rightarrow\bB$  in $L^p([0,T];\bH_0)$, $\forall\,p\in[1,\infty[$,
and reasoning as in~\eqref{eq:NonLin:8} we get
\begin{equation*}
  \begin{aligned}
     D_{N}(\wit_N) \otimes D_{N}(\wit_N) &\longrightarrow A\bw \otimes
    A\bw \quad \hbox{strongly in } L^p([0,T] \times \tore)^9 \quad
    \forall \, p\in[1,  5/3[,
    \\
    \bB_N\otimes\bB_N &\longrightarrow \bB\otimes\bB\quad
    \hbox{strongly in } L^p([0,T] \times \tore)^9 \quad \forall \, p\in[1,5/3[,
    \\
    D_{N}(\wit_N) \otimes \bB_N &\longrightarrow A\bw\otimes\bB\quad
    \hbox{strongly in } L^p([0,T] \times \tore)^9 \quad \forall \, p\in[1, 5/3[.
  \end{aligned}
\end{equation*}
Finally, the proof of the energy inequality follows  the
same steps as before.
\end{proof}

%
\def\ocirc#1{\ifmmode\setbox0=\hbox{$#1$}\dimen0=\ht0 \advance\dimen0
  by1pt\rlap{\hbox to\wd0{\hss\raise\dimen0
  \hbox{\hskip.2em$\scriptscriptstyle\circ$}\hss}}#1\else {\accent"17 #1}\fi}
  \def\polhk#1{\setbox0=\hbox{#1}{\ooalign{\hidewidth
  \lower1.5ex\hbox{`}\hidewidth\crcr\unhbox0}}} \def\cprime{$'$}


\begin{thebibliography}{10}

\bibitem{AS2001}
N.~A. {A}dams and S.~Stolz.
\newblock {\em Deconvolution methods for subgrid-scale approximation in large
  eddy simulation}.
\newblock Modern Simulation Strategies for Turbulent Flow, R.T. Edwards, 2001.

\bibitem{BFR80}
J. Bardina and J.H. Ferziger and W.C. Reynolds,
\newblock {\em Improved  turbulence models based on large eddy simulation of homogeneous, incompressible, turbulent flows},
\newblock Technical Report No.TF-19, Department of mechanical Engineering, Stanford University, Stanford, 1983.

\bibitem{Ber2012}
L.~C. Berselli,
\newblock Towards fluid equations by approximate deconvolution models.
\newblock In {\em Partial differential equations and fluid mechanics}, London
  Math. Soc. Lecture Note Ser.~(402), pages ?+21. Cambridge Univ. Press, Cambridge,
  2012.
\newblock In press.

\bibitem{BIL2006}
L.~C. Berselli, T.~Iliescu, and W.~J. Layton.
\newblock {\em Mathematics of {L}arge {E}ddy {S}imulation of turbulent flows}.
\newblock Scientific Computation. Springer-Verlag, Berlin, 2006.

\bibitem{BL2011}
L.~C. Berselli and R.~Lewandowski.
\newblock Convergence of approximate deconvolution models to the mean
  Navier--Stokes equations.
\newblock {\em Ann. Inst. H. Poincar\'e Anal. Non Lin\'eaire}, 29:171-198, 2012.

\bibitem{BB98}
M. {B}ertero and B. Boccacci.
\newblock {\em  Introduction to Inverse Problems in Imaging}.
\newblock OP Publishing Ltd, 1998.


\bibitem{JB77} 
  J.~Boussinesq.  \textit{Essai sur la th\'eorie des eaux
    courantes}. M\'emoires pr\'esent\'es par divers savants \`a
  l'Acad\'emie des Sciences,  Paris, Vol~23, n$^\circ$~1, 1-660 1877.

\bibitem{Cat2011b}
D~Catania.
\newblock Length-scale estimates for the 3{D} simplified {B}ardina
  magnetohydrodynamic model.
\newblock {\em J. Math. Phys.}, 52(5):053101, 1--13, 2011.

\bibitem{CS2010}
D.~Catania and P.~Secchi.
\newblock Global existence and finite dimensional global attractor for
a 3{D} double viscous {MHD-$\alpha$} model.  
\newblock {\em Commun. Math. Sci}, 8(4), 1021--1040, 2010.

\bibitem{CS2011}
D.~Catania and P.~Secchi.
\newblock Global existence for two regularized {MHD} models in three
  space-dimension.
\newblock {\em Port. Math.}, 68(1), 41--52, 2011.



\bibitem{CSXF05} 
  F.~K. Chow, R.~L. Street, M.~Xue, and J.~H. Ferziger, 
  \newblock {Explicit filtering and reconstruction turbulence modeling for
    large-eddy simulation of neutral boundary layer flow}, 
\newblock   {\em J. Atmospheric Sci.}, 62(7): 2058-2077, 2005.



\bibitem{CLM2012}
T.~Chac\'on-Rebollo and R.~Lewandowski.
\newblock {\em Mathematical and numerical foundations of turbulence models}.
\newblock Birkh\"auser,
New-York, 2012, to appear.


\bibitem{CF1988}
P.~Constantin and C.~Foias.
\newblock {\em Navier-{S}tokes equations}.
\newblock Chicago Lectures in Mathematics. University of Chicago Press,
  Chicago, IL, 1988.

\bibitem{Dav2001}
P.~A. Davidson.
\newblock {\em An introduction to magnetohydrodynamics}.
\newblock Cambridge Texts in Applied Mathematics. Cambridge University Press,
  Cambridge, 2001.

\bibitem{DG1995}
C.~R. Doering and J.~D. Gibbon.
\newblock {\em Applied analysis of the {N}avier-{S}tokes equations}.
\newblock Cambridge Texts in Applied Mathematics. Cambridge University Press,
  Cambridge, 1995.

\bibitem{DE2006}
A.~Dunca and Y.~Epshteyn.
\newblock On the {S}tolz-{A}dams deconvolution model for the large-eddy
  simulation of turbulent flows.
\newblock {\em SIAM J. Math. Anal.}, 37(6):1890--1902 (electronic), 2006.

\bibitem{DL2012}
A.~Dunca and R. Lewandowski
\newblock{Error estimates in Approximate Deconvolution Models.}
\newblock Technical report, arXiv:1111.6362v1, 2011.

\bibitem{FMRT2001a}
C.~Foias, O.~Manley, R.~Rosa, and R~Temam.
\newblock {\em Navier-{S}tokes equations and turbulence}, vol.~83 of {\em
  Encyclopedia of Mathematics and its Applications}.
\newblock Cambridge Univ. Press, Cambridge, 2001.

\bibitem{GC03}
J.~{G}ullbrand and F.~K. Chow, 
\newblock {The effect of numerical errors and turbulence models in
  large-eddy simulation of channel flow, with and without explicit
  filtering},
\newblock{\em J. Fluid Mech.}, 495:323--341, 2003.


\bibitem{HLT2010}
M.~Holst, E.~Lunasin, and G.~Tsogtgerel.
\newblock Analysis of a general family of regularized {N}avier-{S}tokes and
  {MHD} models.
\newblock {\em J. Nonlinear Sci.}, 20(5):523--567, 2010.

\bibitem{LT2010}
A.~Labovsky and C.~Trenchea.
\newblock Approximate deconvolution models for magnetohydrodynamics.
\newblock {\em Numer. Funct. Anal. Optim.}, 31(12):1362--1385, 2010.

\bibitem{LT2011}
A.~Labovsky and C.~Trenchea.
\newblock Large eddy simulation for turbulent magnetohydrodynamic flows.
\newblock {\em J. Math. Anal. Appl.}, 377(2):516--533, 2011.

\bibitem{LaTi2011}
A.~Larios and E.~S. Titi.
\newblock Higher-order global regularity of an inviscid Voigt-regularization of
  the three-dimensional inviscid resistive magnetohydrodynamic equations.
\newblock Technical report, arXiv:1104.0358, 2011.

\bibitem{LST2010}
W.~Layton, M.~Sussman, and C.~Trenchea.
\newblock Bounds on energy, magnetic helicity, and cross helicity dissipation
  rates of approximate deconvolution models of turbulence for {MHD} flows.
\newblock {\em Numer. Funct. Anal. Optim.}, 31(4-6):577--595, 2010.

\bibitem{LL2008}
W.~J. Layton and R.~Lewandowski.
\newblock A high accuracy {L}eray-deconvolution model of turbulence and its
  limiting behavior.
\newblock {\em Anal. Appl. (Singap.)}, 6(1):23--49, 2008.

\bibitem{LL06b}
W. Layton and R. Lewandowski.
\newblock { Residual
stress of approximate deconvolution large eddy simulation models of
turbulence},
\newblock   { \em {J}. {T}urbul.}, {46 }:({7}), 1-21, 2006.

\bibitem{LR2012}
W.~J. Layton and L.~Rebholz.
\newblock {\em {A}pproximate {D}econvolution {M}odels of {T}urbulence
  {A}pproximate {D}econvolution {M}odels of {T}urbulence}, volume 2042 of {\em
  Lecture Notes in Mathematics}.
\newblock Springer, Heidelberg, 2012.

\bibitem{Lew2009}
R.~{L}ewandowski.
\newblock On a continuous deconvolution equation for turbulence models.
\newblock {\em Lecture Notes of Ne\c cas Center for Mathematical Modeling},
  5:62--102, 2009.

\bibitem{LT2007}
J.~S. Linshiz and E.~S. Titi.
\newblock Analytical study of certain magnetohydrodynamic-{$\alpha$} models.
\newblock {\em J. Math. Phys.}, 48(6):065504, 28, 2007.

\bibitem{Lio1969}
J.-L. Lions.
\newblock {\em Quelques m\'ethodes de r\'esolution des probl\`emes aux limites
  non lin\'eaires}.
\newblock Dunod, Gauthier-Villars, Paris, 1969.

\bibitem{LP52} 
L. Prandtl. \textit{Guide \`a travers le m\'ecanique des fluides}, Dunod, Paris 1952.

\bibitem{RS2010}
L.~G. Rebholz and M.~M. Sussman.
\newblock On the high accuracy {NS}-alpha-deconvolution turbulence model.
\newblock {\em Math. Models Methods Appl. Sci.}, 20(4):611--633, 2010.

\bibitem{OR83} 
O. Reynolds. 
\newblock An experimental investigation of the circumstances which
determine whether the motion of water shall be direct or sinuous, and
of the law of resistance in parallel channels. 
\newblock \textit{Philos. Trans. Roy. Soc. London, Ser. A},
174:935--982, 1883.

\bibitem{PS01} 
P. Sagaut. \textit{Large eddy simulation for
    incompressible flows}, Springer-Verlag, Berlin, (2001). 

\bibitem{PB02}
{U. Piomelli and E. Balaras},
 \newblock {Wall-Layer Models for Large-Eddy simulations},
\newblock {\em Annu. Rev. Fluid Mech.}, {34}, 349-374, 2002.



\bibitem{JS63} 
J. Smagorinsky. 
\newblock General Circulation experiments with the primitive
                equations, 
\newblock \textit{Monthly Weather Review}, 91:99--164, 1963.

\bibitem{SA1999}
S.~Stolz and N.~A. Adams.
\newblock An approximate deconvolution procedure for large-eddy simulation.
\newblock {\em Phys. Fluids}, 11(7):1699--1701, 1999.

\bibitem{SAK2001}
S.~Stolz, N.~A. Adams, and L.~Kleiser.
\newblock An approximate deconvolution model for large-eddy simulation with
  application to incompressible wall-bounded flows.
\newblock {\em Phys. Fluids}, 13(4):997--1015, 2001.

\bibitem{Tem2001}
R.~Temam.
\newblock {\em Navier-{S}tokes equations}.
\newblock AMS Chelsea Publishing, Providence, RI, 2001.
\newblock Theory and numerical analysis, Reprint of the 1984 edition.

\end{thebibliography}
\end{document}